\newtheorem{theorem}{Theorem}[section]
\newtheorem{lemma}{Lemma}[section]
\newtheorem{remark}{Remark}[section]
\def\R{\mathbb{R}}
\def\E{\mathcal{E}}
\def\F{\mathcal{F}}
\def\G{\mathcal{G}}
\def\Gp{\pmb{G}}
\def\rr{\pmb{r}}
\def\J{\mathcal{J}}
\def\L{\mathcal{L}}
\def\LL{\mathbb{L}}
\def\dd{~{\rm d}}
\def\Ec{E_{\rm c}}
\def\<{\langle}
\def\>{\rangle}
\title{Numerical Analysis of Finite Dimensional Approximations in Finite Temperature DFT
}
\author{
Ge Xu\footnote{{\tt gexu@mail.bnu.edu.cn}.
School of Mathematical Sciences, Beijing Normal University, Beijing 100875, China.
}, 
~ 
Huajie Chen\footnote{Corresponding. {\tt chen.huajie@bnu.edu.cn}.
School of Mathematical Sciences, Beijing Normal University, Beijing 100875, China.
} 
~ and ~ Xingyu Gao\footnote{{\tt gao\_xingyu@iapcm.ac.cn}.
National Key Laboratory of Computational Physics, Institute of Applied Physics and Computational Mathematics, Beijing 100088, China.
}
}
\date{}
\begin{document}
\maketitle

\begin{abstract}
In this paper, we study numerical approximations of the ground states in finite temperature density functional theory. 
We formulate the problem with respect to the density matrices and justify the convergence of the finite dimensional approximations. 
Moreover, we provide an optimal {\it a priori} error estimate under some mild assumptions and present some numerical experiments to support the theory.

\vskip 0.5cm

\noindent
{\bf Key words.}
finite temperature density functional theory, 
Mermin--Kohn--Sham equation,
density matrix, 
{\it a priori} error estimates.

\vskip 0.3cm

\noindent
{\bf AMS subject classifications.}
65N15, 65N25, 35P30, 81Q05

\end{abstract}

\section{Introduction}
\label{sec:intro}
\setcounter{equation}{0}

Density functional theory (DFT) \cite{hohenberg64,kohn65} has been the most widely used method in electronic structure calculations, which achieves the best compromise between accuracy and computational cost among different approaches \cite{CancesFriesecke23,lin19,martin05,ParrYang94}.
While the standard DFT models are in principle for systems at zero temperature, a finite temperature DFT model was proposed \cite{mermin65} for systems where the temperature effects on electrons are not negligible.
The finite temperature DFT can be viewed as an extension of DFT by including the electron entropy into the total energy, which considers the electrons within a canonical ensemble \cite{ParrYang94}.
It has not only been successfully applied to many practical simulations (see e.g. \cite{kormann12,stegailov10}), but also used to resolve the ``charge sloshing" phenomenon in self-consistent field iterations for metallic systems by smearing the the integer occupation numbers into fractional ones (see e.g. \cite{liu15,motamarri13}).

The finite temperature DFT model is usually formulated by the so-called Mermin-Kohn-Sham (MKS) equation \cite{mermin65,martin05,parr89}:
Find the chemical potential $\mu\in\R$, the orbitals and occupation numbers $(\phi_i,\lambda_i)\in H^1(\R^3)\times\R~(i=1,2,\cdots)$ satisfying
\begin{eqnarray}
\label{intro_MKS}
\begin{cases}
\displaystyle
\left(-\frac{1}{2}\Delta + v_{\rm ext} + v_{\rm H}(\rho) + v_{\rm xc}(\rho) \right) \phi_i = \lambda_i\phi_i  \qquad & ({\rm a})
\\[10pt]
\displaystyle
\int_{\mathbb{R}^3}\phi_i\phi_j = \delta_{ij} & ({\rm b})
\\[10pt]
\displaystyle
\rho = \sum_i f_i|\phi_i|^2 & ({\rm c})
\\[10pt]
\displaystyle
f_i = \left( 1+e^{(\lambda_i-\mu)/(k_{\rm B}T)} \right)^{-1} & ({\rm d})
\\[10pt]
\displaystyle
\sum_i f_i = N & ({\rm e})
\end{cases}
\end{eqnarray}
with $k_{\rm B}$ the Boltzmann constant and $T$ the electron temperature of the system.
The chemical potential $\mu$ is actually determined by the constraint (d) that the sum of occupation numbers equals the electron number.
In equation (a), the potential includes the external potential $v_{\rm ext}$ representing the nuclear attraction, the Hartree potential $V_{\rm H}(\rho)=\int_{\mathbb{R}^3}\frac{\rho(r)}{|\cdot-r|}\dd r$ representing the mean-field electron repulsion, and the exchange-correlation potential $v_{\rm xc}(\rho)$ \cite{martin05}.
The Mermin-Kohn-Sham equation is a nonlinear eigenvalue problem, as the operator depends on both the eigenvalues and eigenfunctions to be solved.
This is usually solved by a self-consistent field (SCF) iteration algorithm in practice.
We note that the standard Kohn-Sham equations will be recovered at the zero temperature limit, which gives the occupation numbers $f_i=1$ when $i\leq N$ and $f_i=0$ when $i>N$ under the gap condition $\lambda_N<\lambda_{N+1}$.

Validation of numerical results is a fundamental issue, particularly the discretization error resulting from the choice of a finite basis set.
For standard DFT models like orbital-free and Kohn-Sham models, there are many works on the numerical analysis in the past decade \cite{cances10,cances12,chen13,chen10,chen11,gavini07,lang10,sur10,zhou07}.
The extension of the existing analysis to the finite temperature DFT model presents additional complexity, as it requires the consideration of a significantly bigger set of orbitals and occupation numbers.
To the best of our knowledge, there is very limited work devoted to its mathematical and numerical analysis.
In \cite{prodan03}, the existence and uniqueness of solutions for the MKS equations were proved by using the Banach's fixed point theorem under the condition that the coupling constant is sufficiently small.
In \cite{cornean08}, the existence and uniqueness were justified with the assumption the absence of exchange correlation potential, by using the Schauder's fixed point theorem.
In \cite{cui2023}, a similar problem, the Schr\"{o}dinger--Possion equation was considered and the a priori error estimate was derived.
We mention that for general MKS equation, the convergence analysis and error estimates under the energy norm are still missing.

The purpose of this work is to justify the convergence of the finite dimensional approximations for finite temperature DFT models and provide a sharp error estimate. 
Unlike the existing works, we formulate the problem with respect to the electron density matrix, which includes all information of the solutions and at the same time avoids handling infinitly many number of orbitals and occupation numbers.
With the density matrix formulation, the ground state of the system can be obtained by solving a variational problem with respect to the density matrix.
We then prove the convergence by using a variational calculus argument, and derive an optimal {\it a priori} error estimate by using the inverse function theorem. 
The results of this work can support a fairly wide range of numerical methods rather than a particular discretization scheme.

\vskip 0.1cm

{\bf Outline}.
The rest of this paper is organized as follows.
In Section \ref{sec:model}, we formulate the finite temperature DFT model as a variational problem with respect to the one-electron density matrix. 
In Section \ref{sec:gs}, we study the free energy functional and give the optimality conditions for the varaitional problem. 
In Sections \ref{sec:numerical_anal}, we show the convergence of finite dimensional approximations and further derive an optimal a priori error estimate under reasonable assumptions.
In Section \ref{sec:numerics}, we present several numerical experiments by plane-wave methods to support our theory.
In Section \ref{sec_conclusion}, we give some concluding remarks and future perspectives.

\vskip 0.1cm

{\bf Notations}.
Throughout this paper, we will use the Dirac bra-ket notation \cite{dirac39}, which defines the ``ket" vector $|\cdot\rangle$ and its conjugate transpose ``bra" vector $\langle\cdot|$.
This has been widely used in quantum mechanics.

For a Banach space $X$, we denote its the dual space by $X^*$.
We denote by $\mathcal{B}(X,Y)$ the space of bounded linear operators from a Banach space $X$ to a Banach space $Y$.
For a Hilbert space $\mathcal{H}$, we denote by $\mathcal{S}(\mathcal{H})$ the space of self-adjoint operators on the Hilbert space $\mathcal{H}$.

We use the standard notations for $L^p$-spaces and $W^{s,p}$-Sobolev spaces, with their associated norms and semi-norms.
We will use $(\cdot,\cdot)$ for the standard $L^2$ inner product.
For $p=2$, we denote by $H^s(\Omega)=W^{s,2}(\Omega)$, and by $H^1_{\#}(\Omega)\subset H^1(\Omega)$ the subspace with appropriate boundary conditions, 
for example, the zero Direichlet boundary condition or the periodic boundary condition.

Let $\mathfrak{S}^1(\Omega)$ be the space of trace class operators and $\mathfrak{S}^2(\Omega)$ be the space of Hilbert-Schmidt operators on $L^2(\Omega)$.
We further define the following Banach space (see \cite{cances14})
\begin{eqnarray}
\label{space}
\mathfrak{S}^{1,1}(\Omega) := \Big\{A\in\mathfrak{S}^1(\Omega)\cap\mathcal{S}\big(L^2(\Omega)\big): |\nabla|A|\nabla|
\in\mathfrak{S}^1(\Omega)\Big\}
\end{eqnarray}
with the induced norm
\begin{eqnarray}
\label{norm}
\|A\|_{\mathfrak{S}^{1,1}(\Omega)} := \|A\|_{\mathfrak{S}^1(\Omega)} + \big\||\nabla| A|\nabla| \big\|_{\mathfrak{S}^1(\Omega)}
= {\rm Tr}\big(|A|\big) + {\rm Tr}\big(|\nabla|A|\nabla|\big) .
\end{eqnarray}

We will use $C$ to denote a generic positive constant that may change from one line to the next.
The dependencies of $C$ on model parameters will normally be clear from the context or stated explicitly.

\section{Free energy in density matrix formulation}
\label{sec:model}
\setcounter{equation}{0}

In the MKS equation \eqref{intro_MKS}, there are infinitely many number of orbitals and corresponding occupation numbers, which bring significant difficulty to the model formulation and numerical analysis.
Instead of the orbitals and occupation numbers, the one-electron density matrix contains all the information, while at the same time provides a much elegant representation of the electron states.
In this work, we shall first reformulate the problem by a variational problem with respect to the one-electron density matrices. 

Physically the system under consideration is set over $\mathbb{R}^3$, but in practicical calculations it is commonly to restrict the system to a bounded domain.
Let $\Omega\subset\mathbb{R}^3$ be a polytypic bounded domain. 
The electron states will then be restricted to the domain with appropriate boundary conditions.

Let $\mathcal{K}_N$ be the set of one-body reduced density matrices
\begin{eqnarray}
\label{set_gamma}
\mathcal{K}_N := \Big\{ \varGamma\in \mathfrak{S}^{1,1}(\Omega): ~0\leq\varGamma\leq 1,~{\rm Tr}(\varGamma)=N \Big\},
\end{eqnarray}
where the space $\mathfrak{S}^{1,1}(\Omega)$ is given in \eqref{space}, and $0\leq\varGamma\leq 1$ stands for $0\leq \big\langle\phi|\varGamma|\phi\big\rangle \leq\|\phi\|_{L^2(\Omega)}^2$ for any $\phi\in L^2(\Omega)$. 
The inequality constraint of $\varGamma$ means that the spectrum of $\varGamma$ should lie in $[0,1]$.
For $\varGamma \in \mathcal{K}_N$, the corresponding electron density associated to $\varGamma$ is
\begin{eqnarray}
\label{rho}
\rho_{\varGamma}(x) := \gamma(x,x),
\end{eqnarray}
where $\gamma(x,y)$ is the kernel of the operator $\varGamma$.
The density matrix $\varGamma$ can be formulated in the following form
\begin{eqnarray}
\label{gamma_dirac}
\varGamma = \sum_i f_i|\phi_i\rangle\langle\phi_i|,
\end{eqnarray}
where the orbitals $\{\phi_i\}_{i=1}^{\infty}$ and the occupation numbers $\{f_i\}_{i=1}^{\infty}$ satisfy 
\begin{eqnarray*}
\label{phi_space}
\phi_i\in H_{\#}^1(\Omega),~~\int_{\Omega}\phi_i\phi_j=\delta_{ij},~~f_i\in[0,1],~~\sum_{i=1}^{\infty}f_i=N~~{\rm and}~~\sum_{i=1}^{\infty}f_i\int_{\Omega}|\nabla \phi_i|^2<\infty.
\end{eqnarray*}

The Helmholtz free energy of the system is given by the sum of energy and entropy.
The energy $\E(\varGamma)$ is given by
\begin{eqnarray}
\label{energy_gamma}
\E(\varGamma) = {\rm Tr}\left(-\frac{1}{2}\Delta\varGamma\right)
+ \int_{\Omega} v_{\rm ext}\rho_{\varGamma}
+ \frac{1}{2}\int_{\Omega}\int_{\Omega}\frac{\rho_{\varGamma}(x)\rho_{\varGamma}(x^{\prime})}{|x-x^{\prime}|}\dd x\dd x^{\prime}
+ \int_{\Omega} e_{\rm xc}(\rho_{\varGamma}),
\end{eqnarray}
where $v_{\rm ext}$ represents the external potential and $e_{\rm xc}(\rho_{\varGamma})$ is the so-called exchange-correlation energy per volume with electron density $\rho_{\varGamma}$ \cite{martin05}.
The entropy $\mathcal{S}(\varGamma)$ is given by
\begin{eqnarray}
\label{entropy}
\mathcal{S}(\varGamma) = \beta^{-1}{\rm Tr} \big (\varGamma\ln\left(\varGamma\right) + \left(I-\varGamma\right)\ln\left(I-\varGamma\right) \big ), 
\end{eqnarray}
where $I$ denotes the identity operator, $\beta= k_B T$ is a constant with $k_B$ the Boltzmann constant and $T$ the temperature. 
Then the free energy functional $\F:\mathcal{K}_N\rightarrow\R$ is given by
\begin{eqnarray}
\label{free_energy_gamma}
\F(\varGamma) := \E(\varGamma) 
+ \mathcal{S}(\varGamma).
\end{eqnarray}

We will need the following assumptions for our convergence analysis.
The assumptions are similar to those in \cite{cances10, chen13}, which can be satisfied for most of the DFT models with local-density approximations (LDA) \cite{martin05}.

\vskip 0.2cm

\noindent
{\bf A1}.
$v_{\rm ext}\in L^{2}\left(\Omega\right)$.

\vskip 0.2cm

\noindent
{\bf A2}.
There exists a constant $c_0>0$, such that
$|e_{\rm xc}(t)|\leq c_0\big(t^{\frac{4}{3}}+1\big)$ for any $t\geq 0$.  

\vskip 0.2cm

\noindent
{\bf A3}.
$e_{\rm xc}\in C^3$ and there exist constants $c_1,c_2>0$, and $p_1 \in[0,2]$, $p_2\in(0,1]$ such that
\begin{align*}
\left|e_{\rm xc}^{\prime}(t)\right| + \left|t e_{\rm xc}^{\prime \prime}(t)\right| \leq c_1\big(1+t^{p_1}\big)
\quad{\rm and}\quad
\left|e_{\rm xc}^{\prime \prime}(t)\right| + \left|t e_{\rm xc}^{\prime \prime \prime}(t)\right| \leq c_2\big(1+t^{p_2-1}\big)
\qquad \forall~t\geq 0.
\end{align*}

\vskip 0.2cm

With the aforementioned assumptions, we can establish both the coercivity and weak lower semicontinuity of the free energy functional $\F\left(\varGamma\right)$ in the following two lemmas.

\begin{lemma}[Coercivity]
\label{lemma:coercivity}
If the assumptions {\bf A1}-{\bf A2} are satisfied, then there exist positive constants $a$ and $b$ such that 
\begin{eqnarray}
\label{bounded_below}
\F\left(\varGamma\right) \geq a\|\varGamma\|_{\mathfrak{S}^{1,1}\left(\Omega\right)} - b
\qquad \forall~ \varGamma \in \mathcal{K}_N.
\end{eqnarray}
\end{lemma}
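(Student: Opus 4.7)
The plan is to show that the kinetic term $K := {\rm Tr}(-\frac{1}{2}\Delta\varGamma)$ dominates every other ingredient of $\F(\varGamma)$, and then convert a bound of the form $\F\gtrsim K$ into one on $\|\varGamma\|_{\mathfrak{S}^{1,1}(\Omega)}$ via the identity
$\|\varGamma\|_{\mathfrak{S}^{1,1}(\Omega)} = {\rm Tr}(\varGamma)+{\rm Tr}(|\nabla|\varGamma|\nabla|) = N + 2K$,
which holds because $\varGamma\ge 0$ forces $|\varGamma|=\varGamma$ and, since $|\nabla|\varGamma|\nabla|\in\mathfrak{S}^1$, cyclicity gives ${\rm Tr}(|\nabla|\varGamma|\nabla|)={\rm Tr}(-\Delta\varGamma)$.

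The non-kinetic part of $\F$ splits into four pieces. The Hartree term is non-negative and is discarded. For the external potential, Cauchy--Schwarz and A1 give $|\int v_{\rm ext}\rho_\varGamma|\le\|v_{\rm ext}\|_{L^2}\|\rho_\varGamma\|_{L^2}$, and for the exchange-correlation, A2 gives $|\int e_{\rm xc}(\rho_\varGamma)|\le c_0(\|\rho_\varGamma\|_{L^{4/3}}^{4/3}+|\Omega|)$. To control these densities I apply the Hoffmann--Ostenhof inequality $\|\nabla\sqrt{\rho_\varGamma}\|_{L^2}^2\le{\rm Tr}(-\Delta\varGamma)=2K$, which together with $\|\rho_\varGamma\|_{L^1}=N$ places $\sqrt{\rho_\varGamma}\in H^1(\Omega)$ with norm of order $(N+K)^{1/2}$; Sobolev's embedding $H^1(\Omega)\hookrightarrow L^6(\Omega)$ then yields $\|\rho_\varGamma\|_{L^3}\le C(N+K)$, and $L^1$--$L^3$ interpolation produces $\|\rho_\varGamma\|_{L^2}\lesssim (N+K)^{3/4}$ and $\|\rho_\varGamma\|_{L^{4/3}}^{4/3}\lesssim (N+K)^{1/2}$. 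Both growth rates are strictly sublinear in $K$, so Young's inequality absorbs these contributions into $\eta K + C_\eta$ for arbitrarily small $\eta>0$.

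The delicate piece is the entropy. Since $\mathcal{S}(\varGamma)\le 0$, I need an upper bound on $|\mathcal{S}(\varGamma)|=\beta^{-1}\sum_i h(f_i)$, where $\{f_i\}\subset[0,1]$ are the eigenvalues of $\varGamma$ and $h(x):=-x\ln x-(1-x)\ln(1-x)\in[0,\ln 2]$. Indices with $f_i\ge 1/2$ number at most $2N$ and contribute at most $2N\ln 2$. For $f_i<1/2$ I use the elementary inequality $h(x)\le C_\alpha x^{1-\alpha}$ valid for each $\alpha\in(0,1)$, reducing the task to a bound on $\sum_i f_i^{1-\alpha}$. The main tool here is the Ky Fan/rearrangement trace inequality, which for the positive operators $\varGamma$ and $-\Delta$ gives ${\rm Tr}(-\Delta\varGamma)\ge\sum_i f_i^{\downarrow}\mu_i^{\uparrow}$, where the $\mu_i$ are the eigenvalues of $-\Delta$ on $\Omega$ with the chosen boundary conditions; Weyl's law $\mu_i\ge c i^{2/3}$ then yields $\sum_i f_i^\downarrow i^{2/3}\le CK$. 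A H\"older inequality with exponents $1/(1-\alpha)$ and $1/\alpha$ finally produces $\sum_i f_i^{1-\alpha}\le C_\alpha(1+K^{1-\alpha})$ whenever $(1-\alpha)/\alpha>3/2$, i.e.\ $\alpha<2/5$. Taking for example $\alpha=1/3$ gives $|\mathcal{S}(\varGamma)|\le C(1+N+K^{2/3})$, again sublinear in $K$.

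Collecting the three bounds yields $\F(\varGamma)\ge(1-\eta)K - C_\eta$ for every $\eta\in(0,1)$, with $C_\eta$ depending only on $N$, $\beta$, $\|v_{\rm ext}\|_{L^2}$, $c_0$ and $|\Omega|$; fixing $\eta$ small and substituting $K=(\|\varGamma\|_{\mathfrak{S}^{1,1}(\Omega)}-N)/2$ delivers \eqref{bounded_below}. I expect the entropy step to be the main obstacle: taken in isolation, $\sum_i h(f_i)$ can diverge even when $\sum_i f_i=N$, so the finiteness of $\F$ genuinely depends on the kinetic control $\varGamma\in\mathfrak{S}^{1,1}(\Omega)$, which translates via Weyl's law into the tail decay of $\{f_i\}$ required to tame the entropy sum.
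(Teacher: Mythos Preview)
Your proof is correct and follows the same architecture as the paper's: control the external and exchange--correlation terms via the Hoffmann--Ostenhof inequality and Sobolev embedding (this is exactly what the paper outsources to \cite[Prop.~3.1]{chen10}), and control the entropy against the kinetic energy via the rearrangement inequality ${\rm Tr}(-\Delta\varGamma)\ge\sum_i f_i^{\downarrow}\mu_i^{\uparrow}$ combined with Weyl's law $\mu_i\gtrsim i^{2/3}$. The one substantive technical difference is in the entropy tail: the paper splits the index set according to whether $f_k\le k^{-(1+\eta)}$, bounding the small-$f_k$ part by a convergent series and the large-$f_k$ part via $\ln k=o(k^{2/3})$, to obtain $|\mathcal{S}(\varGamma)|\le\delta K+C_\delta$ for every $\delta>0$; you instead use the uniform pointwise bound $h(x)\le C_\alpha x^{1-\alpha}$ and a single H\"older step to get $|\mathcal{S}(\varGamma)|\lesssim 1+K^{1-\alpha}$. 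Your route is tidier and avoids the case split, while the paper's yields a nominally sharper linear-with-small-constant bound; either is sufficient for the conclusion, since both are absorbed by $K$.
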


\begin{proof}
We first consider the energy part \eqref{energy_gamma}.
Using $\rho=\sum f_i\left|\phi_i\right|^2$ and the Cauchy-Schwarz inequality, we have
\begin{align*}
|\nabla \sqrt{\rho}| & = \frac{1}{2} \frac{|\nabla \rho|}{\sqrt{\rho}} 
\leq \frac{1}{\sqrt{\rho}} \sum_{i=1}^{\infty} f_i\left|\phi_i\right|\left|\nabla \phi_i\right|
= \left(\sum_{i=1}^{\infty} \sqrt{f_i}\left|\nabla \phi_i\right| \frac{\sqrt{f_i}\left|\phi_i\right|}{\sqrt{\rho}}\right)^2 
\\[1ex] 
& \leq \left(\sum_{i=1}^{\infty} f_i\left|\nabla \phi_i\right|^2\right)^{\frac{1}{2}} \left(\frac{1}{\rho} \sum_{i=1}^{\infty} f_i\left|\phi_i\right|^2\right)^{\frac{1}{2}} 
= \left(\sum_{i=1}^{\infty} f_i\left|\nabla \phi_i\right|^2\right)^{\frac{1}{2}} .
\end{align*}
This implies
\begin{eqnarray}
\label{coer-kinetic}
{\rm Tr}\left(-\Delta\varGamma\right) = {\rm Tr}\big(|\nabla| \varGamma |\nabla|\big) = \sum_{i=1}^{\infty} f_i \int_{\mathbb{R}^3}\left|\nabla \phi_i\right|^2
\geq C \int_{\mathbb{R}^3}|\nabla \sqrt{\rho}|^2.
\end{eqnarray}
Then by using the assumptions {\bf A1}-{\bf A2} and the same arguments as that in \cite[Proposition 3.1]{chen10}, we have that there exists a constant $b_1>0$ such that
\begin{eqnarray}
\label{proof-coer-a}
\E(\varGamma) \geq \frac{1}{4} {\rm Tr}\left(-\Delta\varGamma\right)  - b_1 .
\end{eqnarray}

We next want to show that the entropy part \eqref{entropy} is also bounded from below by the kinetic term.
Let $(\lambda_k,\psi_k)\in\R\times H_{\#}^1(\Omega)$ be the eigenpairs of $-\Delta$ on $\Omega$.
By using \eqref{gamma_dirac} and the fact that $\{\psi_k\}$ form a complete basis set of $H_{\#}^1(\Omega)$, we have that
\begin{eqnarray}
\label{tr_Lap}
{\rm Tr}\left(-\Delta\varGamma\right) = 
\sum_k\langle\psi_k|-\Delta \varGamma|\psi_k\rangle = 
\sum_k \sum_j f_j\langle\psi_k|-\Delta|\phi_j\rangle\langle\phi_j|\psi_k\rangle
\geq \sum_k f_k\lambda_k.
\end{eqnarray}
We then resort to the result for asymptotic distribution of Laplacian eigenvalues \cite{reed03}.
Let $N(\lambda) := {\rm dim} P_{(-\infty, \lambda)}(-\Delta)$ denote the number of Laplacian eigenvalues less than $\lambda$. 
It is shown in \cite{reed03} that as $\lambda \to \infty$, $N(\lambda) \leq C_{\Omega}\lambda^{\frac{3}{2}}$. 
This together with \eqref{tr_Lap} implies that the kinetic term has a lower bound
\begin{eqnarray}
\label{proof:entropy:a}
{\rm Tr}(-\Delta\varGamma)
\geq C\sum_k f_k k^{\frac{2}{3}} .
\end{eqnarray}
To estimate the first part ${\rm Tr}\big(\varGamma\ln(\varGamma)\big) = \sum_{k=1}^{\infty}f_k\ln f_k$ in the entropy \eqref{entropy}, we let $\eta\in (0,\frac{1}{4})$ and $I_{\eta}:=\{k\in\mathbb{N}_+:f_k\leq \frac{1}{k^{1+\eta}}\}$.
Then by using the monotonicity of the function $-x\ln x$ on $[0,1/e]$, we have that for any $\delta>0$, there exists a constant $c_{\delta}>0$ such that
\begin{align}
\label{proof:entropy:b}
\nonumber
-\sum_k f_k\ln f_k
& = -\sum_{k\in I_{\eta}} f_k\ln f_k - \sum_{k\in \mathbb{N}_+\backslash I_{\eta}} f_k\ln f_k
\leq \sum_{k\in I_{\eta}} \frac{1+\eta}{k^{1+\eta}}\ln k + \sum_{k\in \mathbb{N}_+\backslash I_{\eta}} (1+\eta)f_k\ln k
\\[1ex]
& \leq C_{\eta} + \delta \sum_{k\in \mathbb{N}_+\backslash I_{\eta}} f_k k^{\frac{2}{3}} - c_{\delta} .
\end{align}
For the second part ${\rm Tr}\big((I-\varGamma) \ln(I-\varGamma)\big)$ in the entropy \eqref{entropy}, we let $J:=\{k\in\mathbb{N}_+:f_k>\frac{1}{2}\}$.
We have $-(1-f_k)\ln (1-f_k)<\frac{1}{e}$ for any $k\in J$ 
and $-(1-f_k)\ln (1-f_k)\leq-f_k\ln f_k$ for any $k\in \mathbb{N}_+\backslash J$.
Note that $\sum_k f_k = N$ implies that $|J|\leq 2N$ and hence
\begin{align}
\label{proof:entropy:c}
-\sum_{k=1}^\infty (1-f_k)\ln (1-f_k)
& = \sum_{k\in J}\frac{1}{e} - \sum_{k\in \mathbb{N}_+\backslash J} (1-f_k)\ln (1-f_k)
\leq C - \sum_{k\in \mathbb{N}_+\backslash J} f_k\ln (f_k) .
\end{align}
By combing \eqref{proof:entropy:a}, \eqref{proof:entropy:b} and \eqref{proof:entropy:c}, we have that for any $\delta>0$, there exists a positive constant $b_{\delta}$ such that
\begin{eqnarray}
\label{proof-coer-b}
\mathcal{S}(\varGamma) \geq -\delta{\rm Tr}(-\Delta\varGamma)-b_{\delta}.
\end{eqnarray}
Then we can conclude \eqref{bounded_below} by using \eqref{proof-coer-a} and \eqref{proof-coer-b}, which completes the proof.
\end{proof}

\begin{lemma}[Weak lower semicontinuity]
\label{lemma:semicontinuity}
Let $\varGamma\in \mathfrak{S}^{1,1}\left(\Omega\right)$, and $\{\varGamma_n\}_{n\in \mathbf{N}}\subset\mathfrak{S}^{1,1}\left(\Omega\right)$ be a sequence that converges to $\varGamma$ in the weak-* topology.
If the assumptions {\bf A1}-{\bf A2} are satisfied, then
\begin{eqnarray}
\label{lemma-2}
\F(\varGamma) \leq\liminf _{n \rightarrow \infty} \F(\varGamma_n).
\end{eqnarray}
\end{lemma}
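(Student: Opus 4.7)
The plan is to split the free energy $\F(\varGamma)$ into its five natural constituents---kinetic, external, Hartree, exchange--correlation, and entropy---and verify a weak-$*$ lower semicontinuity (or outright continuity) statement for each piece under the convergence $\varGamma_n \rightharpoonup^* \varGamma$ in $\mathfrak{S}^{1,1}(\Omega)$. The decomposition is legitimate because uniform $\mathfrak{S}^{1,1}$ boundedness of $\{\varGamma_n\}$ ensures that every term is well defined along the sequence.

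The first step is to upgrade the weak-$*$ convergence of $\varGamma_n$ to strong convergence of the associated densities. Because the kinetic traces ${\rm Tr}(-\Delta\varGamma_n)$ are uniformly bounded, inequality \eqref{coer-kinetic} shows that $\sqrt{\rho_{\varGamma_n}}$ is bounded in $H^1_{\#}(\Omega)$; the Rellich--Kondrachov theorem then yields, along a subsequence, $\rho_{\varGamma_n}\to\rho_{\varGamma}$ strongly in $L^p(\Omega)$ for every $p\in[1,3)$ and pointwise almost everywhere on $\Omega$.

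With this density convergence, the external term is continuous by A1 together with $L^2$ convergence of the density; the Hartree integral is continuous by the Hardy--Littlewood--Sobolev inequality applied to a pair in $L^{6/5}(\Omega)$; and the exchange--correlation integral is continuous by the growth bound A2 combined with a dominated/Vitali convergence argument, exploiting that $\{\rho_{\varGamma_n}^{4/3}\}$ is equiintegrable since $\rho_{\varGamma_n}$ is bounded in $L^3(\Omega)$. The kinetic trace is handled directly: writing ${\rm Tr}\bigl(|\nabla|\varGamma|\nabla|\bigr)$ as the supremum of pairings with finite-rank spectral projectors of $-\Delta$, each of which is a weak-$*$ continuous linear functional, a Fatou-type step gives ${\rm Tr}\bigl(|\nabla|\varGamma|\nabla|\bigr)\leq \liminf_n {\rm Tr}\bigl(|\nabla|\varGamma_n|\nabla|\bigr)$.

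The hard part is the entropy $\mathcal{S}(\varGamma)=\beta^{-1}{\rm Tr}\,\phi(\varGamma)$ with $\phi(t):=t\ln t+(1-t)\ln(1-t)$: although $\phi$ is convex and continuous on $[0,1]$ with $\phi(0)=\phi(1)=0$, its derivative blows up at the endpoints, which blocks a direct application of Klein's trace inequality. The plan is to introduce a family of convex regularizations $\phi_\varepsilon$ that agree with $\phi$ on $[\varepsilon,1-\varepsilon]$ and are replaced by the chord through $(0,0)$ and $(\varepsilon,\phi(\varepsilon))$ near $0$ (symmetrically near $1$), so that $\phi_\varepsilon\leq\phi$, $\phi_\varepsilon(0)=\phi_\varepsilon(1)=0$, and $\phi_\varepsilon\uparrow\phi$ pointwise as $\varepsilon\to 0$. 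Klein's inequality applied to the bounded $\phi_\varepsilon$ then yields
\begin{equation*}
{\rm Tr}\bigl(\phi_\varepsilon(\varGamma_n)\bigr)\geq {\rm Tr}\bigl(\phi_\varepsilon(\varGamma)\bigr) + {\rm Tr}\bigl(\phi_\varepsilon'(\varGamma)(\varGamma_n-\varGamma)\bigr),
\end{equation*}
and the cross term vanishes as $n\to\infty$ by splitting $\phi_\varepsilon'(\varGamma) = \phi_\varepsilon'(0)I + \bigl(\phi_\varepsilon'(\varGamma)-\phi_\varepsilon'(0)I\bigr)$: the first contribution is killed by ${\rm Tr}(\varGamma_n)={\rm Tr}(\varGamma)=N$, and the second is a compact operator tested against the weak-$*$ null sequence $\varGamma_n-\varGamma$. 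Sending $\varepsilon\to 0$ via monotone convergence on the eigenvalues recovers the estimate for $\phi$, and combining the five pieces delivers \eqref{lemma-2}.
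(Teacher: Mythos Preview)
Your handling of the kinetic, external, Hartree, and exchange--correlation pieces follows the paper's route closely; the one step you elide is the identification of the $L^p$-limit of $\rho_{\varGamma_n}$ with $\rho_{\varGamma}$, which the paper carries out explicitly by testing ${\rm Tr}(\varGamma_n W)$ against smooth $W$ and using that $(I+|\nabla|)^{-1}W(I+|\nabla|)^{-1}$ is compact on $L^2(\Omega)$.

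The entropy argument, however, contains a genuine error. Since $\phi(t)=t\ln t+(1-t)\ln(1-t)$ is \emph{convex} with $\phi(0)=0$, the chord through $(0,0)$ and $(\varepsilon,\phi(\varepsilon))$ lies \emph{above} the graph on $[0,\varepsilon]$, so your regularization satisfies $\phi_\varepsilon\geq\phi$ and $\phi_\varepsilon\downarrow\phi$, not the reverse. This is not a sign slip you can repair: with the correct inequality, Klein still gives $\liminf_n{\rm Tr}\,\phi_\varepsilon(\varGamma_n)\geq{\rm Tr}\,\phi_\varepsilon(\varGamma)$, but you can no longer bound ${\rm Tr}\,\phi_\varepsilon(\varGamma_n)$ from above by ${\rm Tr}\,\phi(\varGamma_n)$, and closing the gap would require a uniform-in-$n$ estimate on the tail $\sum_{f_{i,n}<\varepsilon}\big(\phi_\varepsilon(f_{i,n})-\phi(f_{i,n})\big)$, which is exactly the difficulty the regularization was supposed to bypass. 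Worse, no Lipschitz convex function through the origin can minorize $\phi$ near $0$ at all, because $\phi(t)/t\to-\infty$ as $t\to 0^+$; so a construction with $\phi_\varepsilon(0)=0$ and $\phi_\varepsilon\leq\phi$ is impossible in principle. The paper takes a different route for the entropy: it asserts that the eigenvalues $f_{i,n}\to f_i$ under the weak-$*$ convergence in $\mathfrak{S}^{1,1}(\Omega)$ and then invokes continuity of $x\ln x$ to obtain $\mathcal{S}(\varGamma_n)\to\mathcal{S}(\varGamma)$---so full continuity rather than mere lower semicontinuity---though it, too, leaves the interchange of limit and infinite sum unaddressed.
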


\begin{proof}
As by \eqref{coer-kinetic}, $\left\{\sqrt{\rho_{\varGamma_n}}\right\}_{n \in \mathbb{N}}$ is bounded in $H_{\#}^1(\Omega)$, it follows that $\sqrt{\rho_{\varGamma_{n}}}$ converges to $\xi$ weakly in $H_{\#}^{1}\left(\Omega\right)$, for some $\xi \in H_{\#}^1(\Omega)$. 
According to Embedding theorem, we assert that $\{\sqrt{\rho_{\varGamma_{n}}}\}_{n \in \mathbb{N}}$ converges strongly to $\xi$ in $L^{p}\left(\Omega\right)$ for all $2 \leqslant p<6$.
Then $\{\rho_{\varGamma_{n}}\}_{n \in \mathbb{N}}$ converges to $\xi^2$ strongly in $L^{p}\left(\Omega\right)$ for all $1 \leqslant p<3$. 
The problem is that whether $\rho_{\Gamma}$ coincides with $\xi^2$. 
Due to the uniqueness of weak limit, it is only necessary to show that $\rho_{\Gamma_n}$ converges to $\rho_{\Gamma}$ weakly in $L^2(\Omega)$. 


Note that for any $W \in C_c^{\infty}\left(\Omega\right)$, the operator $(I+|\nabla|)^{-1} W(I+|\nabla|)^{-1}$ is compact on $L^{2}\left(\Omega\right)$.
As $\{\varGamma_n\}_{n\in \mathbb{N}}$ converge to $\varGamma$ for the weak-* topology in $\mathfrak{S}^{1,1}\left(\Omega\right)$, we have that for any compact operator $K$ on $L^2(\Omega)$,
\begin{eqnarray*}
\lim_{n \rightarrow \infty} \operatorname{Tr}\left(\varGamma_{n}K\right)=\operatorname{Tr}(\varGamma K)
\quad{\rm and}\quad
\lim_{n \rightarrow \infty}\operatorname{Tr}\left(|\nabla|\varGamma_{n}|\nabla| K\right)=\operatorname{Tr}(|\nabla| \varGamma|\nabla|K).
\end{eqnarray*}
Therefore, we have
\begin{align*} 
\lim_{n \rightarrow \infty}
\int_{\Omega} \rho_{\varGamma_{n}} W
&= \lim_{n \rightarrow \infty}
\operatorname{Tr}\left(\varGamma_{n}W\right)
\\
& =\lim_{n \rightarrow \infty}
\operatorname{Tr}\left((I+|\nabla|) \varGamma_{n}(I+|\nabla|)(I+|\nabla|)^{-1} W(I+|\nabla|)^{-1}\right)
\\
& = \operatorname{Tr}\left((I+|\nabla|) \varGamma(I+|\nabla|)(I+|\nabla|)^{-1} W(I+|\nabla|)^{-1}\right)
\\
& = \operatorname{Tr}(\varGamma W)=\int_{\Omega} \rho_{\varGamma} W.
\end{align*}
This implies that $\{\rho_{\varGamma_{n}}\}_{n \in \mathbb{N}}$ converges to $\rho_{\varGamma}$ in $\big(C_c^{\infty}\left(\Omega\right)\big)'$ and hence  $\rho_{\Gamma}$ indeed coincides with $\xi^2$, so we get $\rho_{\Gamma_n} \rightarrow \rho_{\Gamma}$ in $L^p(\Omega)$ for any $1 \leq p<3$.
By using the assumptions {\bf A1}, {\bf A2} and the same argument as that in \cite[Proposition 3.2]{chen10}, we have
\begin{multline}
\label{proof-ex}
\qquad
\lim _{n \rightarrow \infty} 
\left( \int_{\Omega} v_{\rm ext} \rho_{\varGamma_{n}} + \frac{1}{2}\int_{\Omega}\int_{\Omega}\frac{\rho_{\varGamma_n}(x)\rho_{\varGamma_n}(x^{\prime})}{|x-x^{\prime}|}\dd x\dd x^{\prime} + \int_{\Omega}e_{\rm xc}(\rho_{\varGamma_{n}}) \right)
\\
= \int_{\Omega} v_{\rm ext} \rho_{\varGamma} + \frac{1}{2}\int_{\Omega}\int_{\Omega}\frac{\rho_{\varGamma}(x)\rho_{\varGamma}(x^{\prime})}{|x-x^{\prime}|}\dd x\dd x^{\prime} + \int_{\Omega}e_{\rm xc}(\rho_{\varGamma}).
\qquad
\end{multline}
For the kinetic part in the energy \eqref{energy_gamma}, we obtain by using the Fatou's theorem for non-negative trace-class operators that
\begin{align}
\label{proof-kinetic-energy}
{\rm Tr}\left(|\nabla| \varGamma|\nabla|\right) 
& \leq \liminf _{n \rightarrow \infty} \operatorname{Tr}\left(|\nabla| \varGamma_{n}|\nabla|\right).
\end{align}
This together with \eqref{proof-ex} leads to the weak lower semicontinuity of the energy.

We shall finally consider the entropy \eqref{energy_gamma}.
By using the expression \eqref{gamma_dirac}
\begin{align*}
\varGamma = \sum_{i=1}^{\infty} f_i|\phi_i\rangle\langle\phi_i|, \qquad 
\varGamma_n = \sum_{i=1}^{\infty} f_{i,n}|\phi_{i,n}\rangle\langle\phi_{i,n}| ,
\end{align*}
and the fact that $\varGamma_n$ converge to $\varGamma$ weakly-* in $\mathfrak{S}^{1,1}\left(\Omega\right)$, we have that $\{f_{i,n}\}_n$ converges to $f_i$ for any $i\in\mathbb{N}_+$.
This together with the continuity of the function $x\ln x$ on $(0,1)$ leads to
\begin{eqnarray}
\label{proof-entropy}
\mathcal{S}(\varGamma) = 
\lim_{n\rightarrow\infty} \mathcal{S}(\varGamma_n)
\end{eqnarray}
Combining \eqref{proof-ex}, \eqref{proof-kinetic-energy} and \eqref{proof-entropy}, we can derive \eqref{lemma-2}, which completes the proof.
\end{proof}

\begin{remark}[Orbital-based formulation: the free energy]
\label{remark:orbital-0}
Under the traditional orbital-based formulation,
the Helmholtz free energy of an $N$-electron system is given by
\begin{eqnarray*}
F\big(\{\phi_i\}, \{f_i\}\big) = E\big(\{\phi_i\}, \{f_i\}\big) + S\big(\{f_i\}\big),
\end{eqnarray*}
where $\phi_i$ is the $i$th (single-particle) orbital, $f_i\in[0,1]$ is the occupation number of $\phi_i$, $E$ and $S$ are the energy and entropy respectively
\begin{align*}
& E\big(\{\phi_i\}, \{f_i\}\big) = \frac{1}{2}\sum_{i}f_i\int_{\mathbb{R}^3}|\nabla\phi_i|^2 + \int_{\mathbb{R}^3} v_{\rm ext}\rho + \frac{1}{2}\int_{\mathbb{R}^3}\int_{\mathbb{R}^3}\frac{\rho(x)\rho(x^{\prime})}{|x-x^{\prime}|}\dd x\dd x^{\prime} + \int_{\mathbb{R}^3} e_{\rm xc}(\rho),
\\[1ex]
& S\big(\{f_i\}\big) = \beta^{-1}\sum_i\Big( f_i\ln f_i + (1-f_i)\ln(1-f_i) \Big).
\end{align*}
Here the electron density is given by $\rho(x)=\sum_i f_i|\phi_i(x)|^2$.
\end{remark}

\section{Ground state solutions and optimality conditions}
\label{sec:gs}
\setcounter{equation}{0}

In finite temperature DFT, the ground states of the system can be obtained by minimizing the free energy with respect to the density matrices, that is
\begin{eqnarray}
\label{min_gamma}
E_0 := \min \Big\{ \F(\varGamma) :~ \varGamma\in \mathcal{K}_N \Big\}.
\end{eqnarray}
This variational problem will be used as the primary model for our numerical analysis throughout this paper.
Based on the coercivity and weak lower semicontinuity results derived in the previous section, we can establish the existence of minimizers for \eqref{min_gamma} (see for example the standard arguments in \cite{evans22}).
For our analysis, we shall further show that the minimizers of the problem satisfy some first- and second-order optimality conditions in the following.

\begin{remark}[Orbital-based formulation: variational form]
\label{remark:orbital-1}
With the orbital-based formulation of the free energy (see Remark \ref{remark:orbital-0}), we can find the equilibrium state of the system by solving the following constrained minimization problem
\begin{eqnarray}
\label{min_phi}
\min\left\{  F\big(\{\phi_i\}, \{f_i\}\big) :~\phi_i\in H_{\#}^1(\Omega), ~ 
\int_{\mathbb{R}^3}\phi_i\phi_j=\delta_{ij}, ~
\sum_i f_i=N \right\} .
\end{eqnarray}
This is equivalent to \eqref{min_gamma}, but within an orbital based formulation.
\end{remark}

\subsection{Euler-Lagrange equations}
\label{sec:MKS}

We first provide the first-order optimality condition for the solutions of \eqref{min_gamma}.
Let
\begin{equation*}
\mathcal{Y}=\mathfrak{S}^{1,1}\left(\Omega\right)\times\R 
\qquad {\rm and} \qquad \mathcal{Y}^*
=\left(\mathfrak{S}^{1,1}\left(\Omega\right)\right)^{*}\times\R.
\end{equation*}
The Lagrangian function associated with \eqref{min_gamma} can be written as
\begin{eqnarray}
\label{eq:Lagrangian}
\L\left(\varGamma, \mu \right) := \F\left(\varGamma\right)
- \mu\big({\rm Tr}\left(\varGamma\right)-N\big) 
\qquad{\rm for}~(\varGamma,\mu)\in \mathcal{Y} ,
\end{eqnarray}
where $\mu \in \mathbb{R}$ represents the Lagrange multiplier.
Note that we do not have to handle the constraint $0\leq \varGamma\leq 1$ due to the logarithmic function in the entropy $\mathcal{S}(\varGamma)$. 

We can then calculate its Fr\'echet derivative $\nabla_{\varGamma}\L: \mathcal{Y} \rightarrow \mathcal{Y}^*$, which is given by
\begin{eqnarray}
\label{eq:Gra}
\nabla_{\varGamma}\L\big( \varGamma,\mu \big) 
= \left[\begin{array}{cc}
\big(H\left(\rho_{\varGamma}\right) - \mu I\big) + 
\beta^{-1}\big(\ln\left(\varGamma\right)-\ln\left(I-\varGamma\right)\big)
\\[1ex]
{\rm Tr} \left(\varGamma\right) - N
\end{array}\right] 
\quad {\rm for}~(\varGamma, \mu)\in \mathcal{Y}.
\end{eqnarray}
If $\bar{\varGamma}$ is a minimizer of \eqref{min_gamma}, then there exists a Lagrangian multiplier $\bar{\mu}\in\R$, such that $(\bar{\varGamma},\bar{\mu})\in \mathcal{Y}$ satisfies the Euler-Lagrange equation
\begin{eqnarray}
\label{Euler-Lagrange}
\nabla_{\varGamma}\L\big( \bar{\varGamma},\bar{\mu} \big) 
= \left[\begin{array}{cc}
\bm{0}
\\[1ex]
0
\end{array}\right] \in \mathcal{Y}^*.
\end{eqnarray}
With a direct calculation, we see that \eqref{Euler-Lagrange} can be rewritten as
\begin{eqnarray}
\label{eq:G}
\G\big(\bar{\varGamma},\bar{\mu}\big) = \left[\begin{array}{cc}
\bm{0}
\\[1ex]
0
\end{array}\right],
\end{eqnarray}
with $\G:\mathcal{Y} \rightarrow \mathcal{Y}^*$ given by
\begin{align}
\label{def:G}
\G\big( \varGamma,\mu \big) := \left[\begin{array}{cc}
f_{\mu}\big(H(\rho_{\varGamma})\big) - \varGamma
\\[1ex]
{\rm Tr}(\varGamma) - N
\end{array}\right]
\quad {\rm for}~(\varGamma, \mu)\in \mathcal{Y}
\end{align}
and $f_{\mu}$ the Fermi-Dirac function
$f_{\mu}(x) = (1+e^{\beta(x-\mu)})^{-1}$.
The advantage of using \eqref{eq:G} instead of \eqref{Euler-Lagrange} is that the singular logarithm term $\ln(\varGamma)$ has been avoided in the expression of $\G$.

\begin{remark}[Orbital-based formulation: MKS equations]
\label{remark:orbital-2}
We can also derive the first-order optimality condition of the orbital-based optimization problem \eqref{min_phi}, which gives rise to the MKS equations \eqref{intro_MKS}.
We see that \eqref{eq:G} is equivalent to  \eqref{intro_MKS} but with a density matrix formulation. 
\end{remark}

\subsection{Second-order optimality condition}
\label{sec:second}

To establish the second-order optimality condition, we introduce a linear response operator $\chi : \mathfrak{S}^{1,1}(\Omega) \to \left(\mathfrak{S}^{1,1}\left(\Omega\right)\right)^{*}$, which represents the derivative $\nabla_{\varGamma}f_{\mu}\big(H(\rho_{\varGamma})\big)$ at the ground state $(\bar{\varGamma},\bar{\mu})$. 
In particular, for $\Psi\in \mathfrak{S}^{1,1}(\Omega)$, we define
\begin{align}
\label{eq:chi}
\nonumber
\chi\Psi :&= 
\nabla_{\varGamma}f_{\mu}\big(H(\rho_{\varGamma})\big) \Big|_{\varGamma=\bar{\varGamma},\mu=\bar{\mu}} \Psi 
\\[1ex]
& = \frac{1}{2\pi i} \oint_{\mathscr{C}} f_{\bar{\mu}}(z) \big(z-H(\rho_{\bar{\varGamma}})\big)^{-1}
\Big\langle \nabla_{\varGamma} v_{\rm eff}(\rho_{\varGamma})\big|_{\varGamma=\bar{\varGamma}}, \Psi \Big\rangle 
\big(z-H(\rho_{\bar{\varGamma}})\big)^{-1} \dd z,
\end{align}
where $f_{\bar{\mu}}$ is the Fermi-Dirac function defined as that in \eqref{def:G}, $\mathscr{C}$ is a smooth curve in the complex plane $\mathbb{C}$ enclosing the spectrum of $H(\rho_{\bar{\varGamma}})$ and avoiding all the singularities of $f_{\bar{\mu}}$.
Here we have exploited the contour representation in the spectral theory \cite{lax02}.

The linear response operator $\chi$ gives the response of the ground state density matrix to a small perturbation $\Psi$ on the density matrix \cite{cances102,gilliam09}.
We will require the following assumption in our analysis, which essentially indicates the stability of the linear response operator \cite{cances102,elu13}.

\vskip 0.2cm 

\noindent
{\bf A4}.
$I-\chi$ is positive definite and invertible. Moreover, there exists a constant $\kappa > 0$ such that $\|(I-\chi)^{-1}\|\leq \kappa$.

\vskip 0.2cm 

We remark that the assumption {\bf A4} is reasonable that can be satisfied in many practical models. 
For example, in the reduced Hartree-Fock model \cite{solovej91}, where the exchange-correlation term vanishes, it holds that 
\begin{align*}
\big\langle \chi\Psi, \Psi\big\rangle 
& = \frac{1}{2\pi i}\oint_\mathscr{C} {\rm Tr} 
\bigg( f_{\mu}(z) \big(z-H(\rho_{\bar{\varGamma}})\big)^{-1}
v_{\rm H}(\rho_{\Psi})\big(z-H(\rho_{\bar{\varGamma}})\big)^{-1} v_{\rm H}(\rho_{\Psi}) \bigg)
\\[1ex]
& = \sum_i\sum_j \frac{f_{\bar{\mu}}(\lambda_i)-f_{\bar{\mu}}(\lambda_j)}{\lambda_i - \lambda_j} 
\bigg| \big\langle\phi_i\big|v_{\rm H}(\rho_\Psi)\big|\phi_j\big\rangle \bigg|^2
\end{align*}
for any $\Psi \in S(L^2(\Omega))$.
Here the fraction gives the derivative of $f_{\bar{\mu}}(\cdot)$ when $\lambda_i=\lambda_j$ in the denominator.
This together with the fact that $f_{\bar{\mu}}$ is a decreasing function implies that $\chi$ is a non-positive operator and hence {\bf A4} could be true. 
We are then ready to give a second order optimality condition for the ground state solutions of \eqref{min_gamma}.

Note that the assumption {\bf A3} implies that $\G$ in \eqref{def:G} is Fr\`{e}chet differentiable.
We can calculate the Jacobian of $\G$ at $(\varGamma,\mu)$, denoted by $\J\big(\varGamma, \mu\big): \mathcal{Y} \rightarrow \mathcal{Y}^*$, with
\begin{eqnarray}
\label{eq:J}
\J\big(\varGamma, \mu\big)\big(\Psi, s\big)
= \left[\begin{array}{cc}
\big\langle\nabla_{\varGamma}f_{\mu}\big(H(\rho_{\varGamma})\big),\Psi\big\rangle - \Psi + s g_{\mu}\big(H(\rho_{\varGamma})\big)  
\\[1ex]
{\rm Tr}(\Psi)
\end{array}\right]
\quad {\rm for}~(\Psi, s)\in \mathcal{Y},
\end{eqnarray}
where $g_{\mu}(x) := \partial f_{\mu}(x)/\partial \mu = (-\beta e^{\beta(x-\mu)})(1+e^{\beta(x-\mu)})^{-2}$.
The following result shows that, at the ground state solution $\bar{\varGamma}$ of \eqref{min_gamma}, $\J\big(\bar{\varGamma}, \bar{\mu}\big)$ is an isomorphism.

\begin{lemma}
\label{stability}
Let $\bar{\varGamma}$ be a minimizer of \eqref{min_gamma} and $\bar{\mu}$ be the corresponding multiplier such that $(\bar{\varGamma},\bar{\mu})\in \mathcal{Y}$ satisfies \eqref{eq:G}.
If the assumptions {\bf A1}-{\bf A4} are satisfied, then $\J\big(\bar{\varGamma}, \bar{\mu}\big): \mathcal{Y}\rightarrow \mathcal{Y}^*$ is an isomorphism. 
Moreover, there is a constant $\sigma>0$ such that
\begin{eqnarray}
\label{ass:strong_stability}
\big\| \J\big(\bar{\varGamma},\bar{\mu}\big)^{-1} \big\| \leq \sigma.
\end{eqnarray}
\end{lemma}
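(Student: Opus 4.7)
The strategy is to invert $\J(\bar\varGamma,\bar\mu)$ explicitly by a Schur-complement reduction that exploits Assumption~\textbf{A4}. Using that $\nabla_\varGamma f_\mu(H(\rho_\varGamma))|_{(\bar\varGamma,\bar\mu)}=\chi$ and writing $\bar g := g_{\bar\mu}(H(\rho_{\bar\varGamma}))$, the equation $\J(\bar\varGamma,\bar\mu)(\Psi,s)=(A,b)$ for given data $(A,b)\in\mathcal{Y}^*$ reads
\begin{align*}
(\chi - I)\Psi + s\,\bar g = A \quad \text{in }~ \big(\mathfrak{S}^{1,1}(\Omega)\big)^*, \qquad {\rm Tr}(\Psi) = b.
\end{align*}
By \textbf{A4}, $I-\chi$ is an isomorphism with $\|(I-\chi)^{-1}\|\leq\kappa$, so I would solve the first equation uniquely for $\Psi = s\,(I-\chi)^{-1}\bar g - (I-\chi)^{-1}A$ and substitute into ${\rm Tr}(\Psi)=b$, reducing the whole problem to the scalar equation
\begin{align*}
\alpha\, s = b + {\rm Tr}\!\big((I-\chi)^{-1}A\big), \qquad \alpha := {\rm Tr}\!\big((I-\chi)^{-1}\bar g\big).
\end{align*}

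Provided $\alpha\neq 0$, the variable $s$, and hence $\Psi$, is uniquely determined, which gives the bijectivity of $\J(\bar\varGamma,\bar\mu)$; the quantitative bound \eqref{ass:strong_stability} then follows by propagating $\|(I-\chi)^{-1}\|\leq\kappa$, $\|\bar g\|_{\mathfrak{S}^{1,1}}<\infty$, and $|\alpha|^{-1}<\infty$ through the explicit inversion formula. To see that $\bar g\in\mathfrak{S}^{1,1}(\Omega)$ is well defined and has strict sign, I would invoke the fact that $H(\rho_{\bar\varGamma})=-\tfrac12\Delta + v_{\rm eff}(\rho_{\bar\varGamma})$ on the bounded domain $\Omega$ with the boundary conditions of $H_{\#}^1(\Omega)$ has compact resolvent, hence discrete spectrum $\{\lambda_k\}$ with $\lambda_k\to\infty$; the spectral calculus then yields $\bar g = \sum_k g_{\bar\mu}(\lambda_k)|\psi_k\rangle\langle\psi_k|$, and because $g_{\bar\mu}$ has strict sign on $\R$ with exponential decay at infinity, this is a nonzero, sign-definite, trace-class operator.

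The crux of the proof is verifying that $\alpha\neq 0$. Interpreting $\chi$ as a symmetric bilinear form on $\mathfrak{S}^{1,1}(\Omega)\times\mathfrak{S}^{1,1}(\Omega)$ (as the contour representation in \eqref{eq:chi} makes manifest) and reading \textbf{A4} as positive-definiteness of the associated form, I would rewrite
\begin{align*}
|\alpha| = {\rm Tr}\!\Big(|\bar g|^{1/2}(I-\chi)^{-1}|\bar g|^{1/2}\Big) > 0,
\end{align*}
using that $|\bar g|^{1/2}$ is injective on $\mathrm{ran}\,|\bar g|^{1/2}$ and that $(I-\chi)^{-1}$ is positive-definite, so the operator inside is a nonzero positive trace-class operator. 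The main technical obstacle is making this last step rigorous: carefully justifying the square-root manipulation and the sign of the trace when $(I-\chi)^{-1}$ is viewed as a map from the dual $(\mathfrak{S}^{1,1}(\Omega))^*$ back to $\mathfrak{S}^{1,1}(\Omega)$ rather than as a self-adjoint operator on a single Hilbert space. Once this interpretation is settled, the Schur-complement computation simultaneously produces the isomorphism property of $\J(\bar\varGamma,\bar\mu)$ and the quantitative bound \eqref{ass:strong_stability}, completing the proof.
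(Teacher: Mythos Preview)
Your proposal is correct and follows essentially the same route as the paper: a Schur-complement reduction using \textbf{A4} to invert $I-\chi$, substitution into the trace constraint to obtain a scalar equation for $s$, and then verification that the scalar coefficient $\alpha={\rm Tr}\big((I-\chi)^{-1}\bar g\big)$ is nonzero via the sign-definiteness of $\bar g$ together with the positivity of $(I-\chi)^{-1}$. The paper handles the $\alpha\neq 0$ step more tersely---simply noting that the eigenvalues of $g_{\bar\mu}(H(\rho_{\bar\varGamma}))$ are negative and $(I-\chi)^{-1}$ is positive, hence the trace is negative---whereas your square-root manipulation makes the same idea explicit; in substance the two arguments coincide.
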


\begin{proof}
We need to show that the equation
\begin{eqnarray}
\label{sec:op}
\J\big(\bar{\varGamma}, \bar{\mu}\big)(\Psi, s)=(\Phi, t).
\end{eqnarray}
is uniquely solvable for any $(\Phi, t)\in \mathcal{Y}^*$.
Using \eqref{eq:chi} and \eqref{eq:J}, we can rewrite \eqref{sec:op} as 
\begin{eqnarray}
\label{iso:J}
\begin{cases}
(\chi-I)\Psi + s g_{\mu}\big(H(\rho_{\varGamma})\big) = \Phi, 
\\[1ex]
{\rm Tr}(\Psi) = t.
\end{cases}
\end{eqnarray}
Then for any $(\Phi, t) \in \mathcal{Y}^*$, we have from the first equation of \eqref{iso:J} that
\begin{eqnarray}
\label{Psi_value}
\Psi = (\chi-I)^{-1} \big(\Phi - s g_{\mu}(H(\rho_{\varGamma}))\big).
\end{eqnarray}
Substituting \eqref{Psi_value} into the second equation of \eqref{iso:J} gives the expression for $s$
\begin{eqnarray}
\label{s_value}
s = \frac{{\rm Tr}\left(\left(\chi-I\right)^{-1}\Phi\right)
-t}{{\rm Tr}\left(\left(\chi-I\right)^{-1}g_{\mu}\left(H\left(\rho_{\varGamma}\right)\right)\right)}.
\end{eqnarray}

Note that $g_\mu(x) < 0$ implies that the eigenvalues of $g_{\mu}\big(H(\rho_{\varGamma})\big)$ are negative. 
Since $I-\chi$ is positive from the assumption {\bf A4}, we have that ${\rm Tr}\left(\left(\chi-I\right)^{-1}g_{\mu}\left(H\left(\rho_{\varGamma}\right)\right)\right)$ is negative, whose value depends on the inverse temperature $\beta$, the spectrum distribution of $H(\rho_{\varGamma})$, and the operator norm $\|(\chi - I)^{-1}\|$. 
Then we have 
\begin{eqnarray}
\label{s_estimate}
|s| \leq C_{\kappa} \big(\|\Phi\|_{\mathfrak{S}^{1,1}(\Omega)}+|t|\big),
\end{eqnarray}
where $C_\kappa$ is a positive constant depending on $\kappa$ in the assumption {\bf A4}. 

Finally, we substitute \eqref{s_value} into \eqref{Psi_value} and get
\begin{eqnarray}
\Psi = (\chi-I)^{-1} \bigg(\Phi - \frac{{\rm Tr}\left(\left(\chi-I\right)^{-1}\Phi\right)
-t}{{\rm Tr}\left(\left(\chi-I\right)^{-1}g_{\mu}\left(H\left(\rho_{\varGamma}\right)\right)\right)} g_{\mu}\left(H\left(\rho_{\varGamma}\right)\right)\bigg).
\end{eqnarray}
This together with the assumption {\bf A4} and the estimate \eqref{s_estimate} leads to
\begin{eqnarray*}
\|\Psi\|_{\mathfrak{S}^{1,1}(\Omega)} \leq  \tilde{C}_{\kappa}\big(\|\Phi\|_{\mathfrak{S}^{1,1}(\Omega)} + |t|\big)
\end{eqnarray*}
with some constant $\tilde{C}_{\kappa}$.
Therefore we get $\J\big(\bar{\varGamma},\bar{\mu}\big)$ is an isomorphism and \eqref{ass:strong_stability} holds.
\end{proof}

In the convergence analysis, we will also need the continuity of the first- and second-order derivatives, which is stated in the following lemma.

\begin{lemma}
\label{lemma:continuity}
Let $\bar{\varGamma}$ be a minimizer of \eqref{min_gamma} and $\bar{\mu}$ be the corresponding multiplier such that $\bar{y}=(\bar{\varGamma},\bar{\mu})\in \mathcal{Y}$ satisfies \eqref{eq:G}.
If the assumptions {\bf A1}-{\bf A3} are satisfied, then there exists $\delta>0$ such that both $\mathcal{G}$ and $\J$ are Lipschitz continuous in the neighborhood $B_{\delta}(\bar{y})$, more precisely, there exists constants $C_1$ and $C_2$ depending on $\bar{y}$, such that 
\begin{align}
\label{L-continuty-G}
& \left\|\mathcal{G}(y_1)
-\mathcal{G}(y_2)\right\|_{\mathcal{Y}^{*}}
\leq C_1\left\|y_1-y_2\right\|_{\mathfrak{S}^{1,1}(\Omega)},
\\[1ex]
\label{L-continuty-J}
& \left\|\J(y_1) - \J(y_2)\right\|_{\mathcal{L}(\mathcal{Y},\mathcal{Y}^{*})} \leq C_2 \left\|y_1-y_2\right\|_{\mathfrak{S}^{1,1}(\Omega)}
\end{align}
for any $y_1,y_2\in B_{\delta}(\bar{y})$.
\end{lemma}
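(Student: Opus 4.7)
The plan is to prove both estimates by splitting each difference into two pieces, one capturing the change in $\mu$ and one capturing the change in $\varGamma$, and then use the holomorphic functional calculus already invoked in \eqref{eq:chi} to reduce everything to resolvent estimates along the contour $\mathscr{C}$. Throughout, I intend to choose $\delta>0$ small enough that the contour $\mathscr{C}$ enclosing the spectrum of $H(\rho_{\bar\varGamma})$ and avoiding the singularities of $f_{\bar\mu}$ can be kept fixed for every $y=(\varGamma,\mu)\in B_\delta(\bar y)$; this is possible because $H(\rho_\varGamma)$ depends continuously on $\varGamma$ in operator norm and the singularities of $f_\mu$ move continuously with $\mu$.

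For \eqref{L-continuty-G}, the second component $\mathrm{Tr}(\varGamma_1-\varGamma_2)$ is controlled immediately by $\|\varGamma_1-\varGamma_2\|_{\mathfrak{S}^{1,1}(\Omega)}$. For the operator component I would write
\begin{equation*}
f_{\mu_1}\bigl(H(\rho_{\varGamma_1})\bigr)-f_{\mu_2}\bigl(H(\rho_{\varGamma_2})\bigr)
=\bigl[f_{\mu_1}(H_1)-f_{\mu_2}(H_1)\bigr]+\bigl[f_{\mu_2}(H_1)-f_{\mu_2}(H_2)\bigr],
\end{equation*}
with $H_i:=H(\rho_{\varGamma_i})$. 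The first bracket is handled by the smoothness of $\mu\mapsto f_\mu$ as a multiplier along $\mathscr{C}$, giving a bound proportional to $|\mu_1-\mu_2|$. The second bracket is rewritten through the resolvent identity
$(z-H_1)^{-1}-(z-H_2)^{-1}=(z-H_1)^{-1}(H_1-H_2)(z-H_2)^{-1}$,
so the task reduces to estimating $\|v_{\rm H}(\rho_{\varGamma_1})-v_{\rm H}(\rho_{\varGamma_2})\|$ and $\|v_{\rm xc}(\rho_{\varGamma_1})-v_{\rm xc}(\rho_{\varGamma_2})\|$ in a suitable operator norm. The Hartree piece is linear in $\rho$ and therefore trivial; for the exchange-correlation piece I would invoke assumption \textbf{A3} together with the Sobolev embedding applied to $\sqrt{\rho_\varGamma}\in H^1_\#(\Omega)$ (as used in the proof of Lemma~\ref{lemma:coercivity}) to obtain $\|v_{\rm xc}(\rho_{\varGamma_1})-v_{\rm xc}(\rho_{\varGamma_2})\|\lesssim\|\rho_{\varGamma_1}-\rho_{\varGamma_2}\|_{L^p}$ for a suitable $p$, and then bound the density difference by $\|\varGamma_1-\varGamma_2\|_{\mathfrak{S}^{1,1}(\Omega)}$.

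For \eqref{L-continuty-J}, the second row of $\J$ is $\mathrm{Tr}(\Psi)$, independent of $y$, so that row contributes nothing. The first row splits as
\begin{equation*}
\J(y_1)(\Psi,s)-\J(y_2)(\Psi,s)=\bigl(\chi_{y_1}-\chi_{y_2}\bigr)\Psi+s\bigl[g_{\mu_1}(H_1)-g_{\mu_2}(H_2)\bigr],
\end{equation*}
where $\chi_y$ is the response operator \eqref{eq:chi} at $y$. The $g$-difference is treated exactly as the $f$-difference above, since $g_\mu=\partial_\mu f_\mu$ is again analytic along $\mathscr{C}$ and smooth in $\mu$. For $\chi_{y_1}-\chi_{y_2}$ I would use the contour-integral representation \eqref{eq:chi} and again decompose into (i) a shift of the Fermi-Dirac factor $f_\mu$, (ii) a shift in the two resolvents $(z-H_i)^{-1}$ via the resolvent identity, and (iii) a shift in $\nabla_\varGamma v_{\rm eff}(\rho_\varGamma)$. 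Pieces (i) and (ii) reduce to the same scalar/resolvent estimates as before, while piece (iii) is where assumption \textbf{A3} is used at its full strength: the Lipschitz constant of $\rho\mapsto e_{\rm xc}''(\rho)$ is controlled by the bound on $|t\,e_{\rm xc}'''(t)|$, and then the Sobolev embedding again converts $L^p$-control of the density into $\mathfrak{S}^{1,1}$-control of $\varGamma$.

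The main obstacle I anticipate is the bookkeeping of operator norms in step (iii): the factor $\nabla_\varGamma v_{\rm eff}(\rho_\varGamma)$ acts as a linear map $\mathfrak{S}^{1,1}(\Omega)\to\mathcal{B}(L^2(\Omega))$ whose difference at the two points must be shown to map $\mathfrak{S}^{1,1}(\Omega)$ continuously into $(\mathfrak{S}^{1,1}(\Omega))^*$ with norm proportional to $\|\varGamma_1-\varGamma_2\|_{\mathfrak{S}^{1,1}(\Omega)}$. This requires combining the pointwise growth bounds from \textbf{A3} with Hölder-type inequalities in Schatten classes, using that $|\nabla|(z-H_i)^{-1}$ is bounded on $L^2(\Omega)$ uniformly along $\mathscr{C}$. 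The remaining pieces of the argument are routine once this technical step is in place.
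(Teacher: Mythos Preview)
Your proposal is correct and follows essentially the same approach as the paper: both arguments reduce the Lipschitz estimates to the contour representation plus the resolvent identity $(z-H_1)^{-1}-(z-H_2)^{-1}=(z-H_1)^{-1}(H_1-H_2)(z-H_2)^{-1}$, and then invoke assumption~\textbf{A3} to control $H(\rho_{\varGamma_1})-H(\rho_{\varGamma_2})$ and its derivative. Your treatment is in fact more detailed than the paper's---the paper dispatches \eqref{L-continuty-J} with ``by a similar calculation as above'' where you explicitly decompose $\chi_{y_1}-\chi_{y_2}$ into the three pieces (i)--(iii) and correctly flag the Schatten-norm bookkeeping in (iii) as the only nontrivial point.
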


\begin{proof}
We first prove the continuity of $\mathcal{G}$ in \eqref{L-continuty-G}. 
Note that continuity with respect to the chemical potential $\mu$  is quite obvious due to the smoothness of the Fermi-Dirac function with respect to $\mu$.
Then it is sufficient for us to show
\begin{eqnarray*}
\label{eq:lip_E}
\left\|f_{\mu}\big(H(\rho_{\varGamma_1})\big) - f_{\mu}\big(H(\rho_{\varGamma_2})\big)\right\|_{\left(\mathfrak{S}^{1,1}\left(\Omega\right)\right)^{*}} \leq C\|\varGamma_1 - \varGamma_2\|_{\mathfrak{S}^{1,1}(\Omega)} .
\end{eqnarray*}
Using the contour representation, we have 
\begin{align*}
& f_{\mu}\big(H(\rho_{\varGamma_1})\big) - f_{\mu}\big(H(\rho_{\varGamma_2})\big) 
\\[1ex] 
& = \frac{1}{2\pi i}\oint_\mathscr{C}f_\mu(z)\Big(\big(z-H(\rho_{\varGamma_1})\big)^{-1} - \big(z-H(\rho_{\varGamma_2})\big)^{-1} \Big) \dd z
\\[1ex] 
& = \frac{1}{2\pi i}\oint_\mathscr{C} 
 f_\mu(z)\big(z-H(\rho_{\varGamma_1})\big)^{-1}
 \big(H(\rho_{\varGamma_1})-
 H(\rho_{\varGamma_2})\big)\big(z-H(\rho_{\varGamma_2})\big)^{-1} \dd z
\end{align*}
with $\mathscr{C}$ being a smooth curve in the complex plane $\mathbb{C}$ enclosing the spectrum of $H(\rho_{\varGamma_1})$ and $H(\rho_{\varGamma_2})$. 
Since the the assumption {\bf A3} implies hold, we present
\begin{eqnarray*}
\label{LC-Hatree}
\left\|H(\rho_{\varGamma_1})-
H(\rho_{\varGamma_2})\right\|_{\left(\mathfrak{S}^{1,1}\left(\Omega\right)\right)^{*}} \leq C\|\varGamma_1 - \varGamma_2\|_{\mathfrak{S}^{1,1}(\Omega)},
\end{eqnarray*}
then we get \eqref{L-continuty-G}. 
    
For the continuity of $\mathcal{J}$ in \eqref{L-continuty-J}, it is sufficient to verify the Lipschitz continuity of
$\langle\nabla_{\varGamma}f_{\mu}\big(H(\rho_{\varGamma})\big),\Psi\rangle$ for any $\Psi\in \mathfrak{S}^{1,1}(\Omega)$. 
By using the contour representation in \eqref{eq:chi} and the assumption {\bf A3}, we can obtain by a similar calculation as above that
\begin{align*}
\big\langle\nabla_{\varGamma}f_{\mu}\big(H(\rho_{\varGamma_1})\big),\Psi\big\rangle - \big\langle\nabla_{\varGamma}f_{\mu}\big(H(\rho_{\varGamma_2})\big),\Psi\big\rangle 
\leq C\|\varGamma_1 - \varGamma_2\|_{\mathfrak{S}^{1,1}(\Omega)} \|\Psi\|_{\mathfrak{S}^{1,1}(\Omega)}.
\end{align*}
This completes the proof.
\end{proof}

\section{Numerical analysis of the Galerkin approximations}
\label{sec:numerical_anal}
\setcounter{equation}{0}

In this section, we shall investigate the finite dimensional approximations of \eqref{min_gamma}. 
We shall primarily justify the convergence of ground state approximations and derive an optimal {\it a priori} error estimate.

\subsection{Finite dimensional approximations}
\label{sec:discretization}

Let $V_n \subset H_{\#}^1(\Omega)~(n\in\mathbb{Z}_+)$ be a sequence of finite dimensional nested subspace satisfying 
\begin{align}
\label{Vn-dense}
V_n\subset V_m\quad\forall~n<m
\qquad{\rm and}\qquad
\lim_{n\rightarrow\infty}\sup_{\varphi\in H_{\#}^1(\Omega)}\inf_{\psi_n\in V_n} \|\varphi-\psi_n\|_{H^1} = 0.
\end{align}
This implies that $V_n$ become dense in $H_{\#}^1(\Omega)$ as $n\rightarrow\infty$.
Let $\pi_n: H_{\#}^1(\Omega) \to V_n$ be the $H^1$-projection such that 
\begin{align*}
\|\pi_n\varphi - \varphi\|_{H^1} = \inf_{\phi_n\in V_n}\|\phi_n-\varphi\|_{H^1}.
\end{align*}
We mention that in practical electronic structure calculations, the commonly employed discretization $V_n$ can take the form of plane-wave methods for periodic boundary conditions \cite{cances12, martin05} (see also the numerical experiments in Section \ref{sec:numerics}), or finite element methods for Dirichlet boundary conditions \cite{chen13, segall02, sur10}.

We can further define a corresponding projection $\Pi_n:\mathfrak{S}^{1,1}(\Omega)\to S\left(V_{n}\right)\cap\mathfrak{S}^{1,1}(\Omega)$ for the density matrix such that 
by
\begin{eqnarray}
\label{Pi_Gamma}
\left\|\Pi_{n} \varGamma - \varGamma\right\|_{\mathfrak{S}^{1,1}(\Omega)} = \inf_{\Psi_n\in S\left(V_{n}\right)\cap\mathfrak{S}^{1,1}(\Omega)} \left\|\Psi_n-\varGamma\right\|_{\mathfrak{S}^{1,1}(\Omega)} .
\end{eqnarray}
In particular, for $\displaystyle\varGamma = \sum_{i} f_i|\phi_{i}\rangle\langle\phi_{i}|$, we have $\displaystyle\Pi_n\varGamma := \sum_{i} f_i\big|\pi_n\phi_{i}\big\rangle\big\langle\pi_n\phi_{i}\big|$ and 
$\displaystyle\|\Pi_n\varGamma- \varGamma\|_{\mathfrak{S}^{1,1}(\Omega)} = \sum_i f_i \|\phi_i - \pi_n\phi_i\|_{H^1}^2$.
We have from \eqref{Vn-dense} that
\begin{eqnarray}
\label{pi_vargamma}
\lim _{n \rightarrow \infty}
\left\|\varGamma-\Pi_{n} 
\varGamma\right\|_{\mathfrak{S}^{1,1}(\Omega)} = 0
\qquad\forall~\varGamma\in \mathfrak{S}^{1,1}(\Omega).
\end{eqnarray}
Then we can define a finite dimensional subspace of $\mathcal{K}_N$ by
\begin{align*}
\mathcal{K}_{N,n}
:=\mathcal{K}_N \cap \mathcal{S}(V_n)
= \Big\{\varGamma_{n} \in \mathcal{S}(V_{n})\cap\mathfrak{S}^{1,1}(\Omega)
~:~ 
0\leq\varGamma_{n}\leq1, ~ \operatorname{Tr}\left(\varGamma_{n}\right) = N \Big\}.
\end{align*}
With the above discretizations,  the problem \eqref{min_gamma} can be approximated by 
\begin{equation}
\label{dis:gal-mod}
E_n := \min \Big\{\F\left(\varGamma_{n}\right) :~ \varGamma_{n} \in \mathcal{K}_{N,n}\Big\},
\end{equation}
where the free energy functional $\F$ is given by \eqref{energy_gamma}. 

We can then derive the first- and second-order optimality condition of \eqref{dis:gal-mod}, which can be viewed as the finite dimensional approximations of the continuous counterparts.
Let 
\begin{equation*}
\mathcal{Y}_n=\big(\mathcal{S}(V_n)\cap \mathfrak{S}^{1,1}(\Omega)\big)\times\R\qquad {\rm and} \qquad \mathcal{Y}_n^*=\big(\mathcal{S}(V_n)\cap \mathfrak{S}^{1,1}(\Omega)\big)^*\times\R.
\end{equation*}
By using the same arguments as those from \eqref{eq:Gra} to \eqref{eq:G}, we can derive the first-order optimality condition of \eqref{dis:gal-mod}:
If $\bar{\varGamma}_n$ is the minimizer of \eqref{dis:gal-mod}, then there exists a Lagrange multipiler $\bar{\mu}_n$ such that
\begin{align*}
\label{eq:gal-G=0}
\G_n\big( \bar{\varGamma}_n,\bar{\mu}_n \big) 
=\left[\begin{array}{cc}
\bm{0}
\\[1ex]
0
\end{array}\right] \in \mathcal{Y}_n^*,
\end{align*}
where $\G_n:\mathcal{Y}_n \rightarrow 
\mathcal{Y}_n^*$ is given by
\begin{eqnarray}
\label{eq:gal-G}
\G_n\big( \varGamma_n,\mu_n \big)  := \left[\begin{array}{cc}
f_{\mu_n}\big(H(\rho_{\varGamma_n})\big) -  \varGamma_n
\\[1ex]
{\rm Tr}(\varGamma_n) - N
\end{array}\right]
\qquad {\rm for}~(\varGamma_n, \mu_n)\in \mathcal{Y}_n.
\end{eqnarray}

\begin{remark}[Orbital-based formulation: the discretized MKS equations]
\label{remark:orbital-3}
In practical calculations, it is common to solve the following discretized MKS equation (instead of the operator form \eqref{eq:gal-G}) in order to obtain the ground state approximation:
Find $\left(\psi_{i,n}, \lambda_{i,n}\right) 
\in V_n \times \mathbb{R}~(i=1,2,\cdots)$ and $\mu_n\in\R$, such that $(\psi_{i,n},\psi_{j,n})=\delta_{ij}$, $\sum_i f_{\mu_n}(\lambda_{i,n})=N$ and
\begin{equation}
\label{orbital:phi:n}
a\left(\rho_n;\psi_{i,n}, \phi_n\right)=\lambda_{i,n}\left(\psi_{i,n}, \phi_n\right) 
\qquad \forall ~\phi_n \in V_n ,
\end{equation}
where $\rho_n := \sum_i f_{\mu_n}(\lambda_{i,n})|\psi_{i,n}|^2$ and
$a(\rho; \psi, \phi) := \frac{1}{2} \int \nabla \psi \cdot \nabla \phi+\int v_{\rm eff}(\rho)\psi \phi$.
\end{remark}

\vskip 0.2cm

We can further give the second-order optimality condition of \eqref{dis:gal-mod}.
We first write down the derivative of $\mathcal{G}_n$ by $\J_n\big(\varGamma_n, \mu_n\big):\mathcal{Y}_n \rightarrow 
\mathcal{Y}_n^*$, with
\begin{eqnarray*}
\label{eq:gal-J}
\J_n\big(\varGamma_n, \mu_n\big)
\big(\Psi, s\big)
=\left[\begin{array}{cc}
\big\langle\nabla_{\varGamma_n}f_{\mu_n}\big(H(\rho_{\varGamma_n})\big),\Psi\big\rangle - \Psi + s g_{\mu_n}\big(H(\rho_{\varGamma_n})\big)
\\[1ex]
{\rm Tr}(\Psi)
\end{array}\right]
\quad {\rm for}~(\Psi,s)\in \mathcal{Y}_n
\end{eqnarray*}
with $g_{\mu_n}(x) = \partial f_{\mu_n}(x)/\partial \mu_n = (-\beta e^{\beta(x-\mu_n)})(1+e^{\beta(x-\mu_n)})^{-2}$. 
Then the following result states the second-order optimality condition, which can be viewed as a discrete counterpart of Lemma \ref{stability}.
It indicates that $\J_n$ is also an isomorphism around $(\bar{\varGamma},\bar{\mu})$ when $n$ is sufficiently large.

\begin{lemma}
\label{stabilityvn}
Let $(\bar{\varGamma},\bar{\mu})\in \mathcal{Y}$ be a solution of \eqref{eq:G}.
If the assumptions {\bf A1}-{\bf A4} are satisfied, then there exists $n_0\in\mathbb{N}_+$ such that for any $n>n_0$, $\J_n\big(\Pi_n\bar{\varGamma},\bar{\mu}\big) : \mathcal{Y}_n \to \mathcal{Y}_n^*$ is an isomorphism and
\begin{eqnarray}
\label{Jn:iso}
\big\|\mathcal{J}_n\big(\Pi_n\bar{\varGamma},\bar{\mu}\big)^{-1}\big\| \leq 2\sigma
\end{eqnarray}
with $\sigma$ the stability constant given in \eqref{ass:strong_stability} of Lemma \ref{stability}.
\end{lemma}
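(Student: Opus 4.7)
The plan is to combine the Lipschitz perturbation estimate of Lemma~\ref{lemma:continuity} with the density property \eqref{pi_vargamma} to move from $\J(\bar\varGamma,\bar\mu)$ to the nearby continuous operator $\J(\Pi_n\bar\varGamma,\bar\mu)$, and then to descend from $\mathcal{Y}$ to the Galerkin subspace $\mathcal{Y}_n$ by a finite-dimensional lower-bound argument that mimics the explicit inversion carried out in the proof of Lemma~\ref{stability}.

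For the first (perturbation) step, \eqref{pi_vargamma} gives $\|\Pi_n\bar\varGamma-\bar\varGamma\|_{\mathfrak{S}^{1,1}(\Omega)}\to 0$, so $(\Pi_n\bar\varGamma,\bar\mu)\in B_\delta(\bar y)$ for $n$ large. The Lipschitz bound \eqref{L-continuty-J} then yields
\[
\bigl\|\J(\Pi_n\bar\varGamma,\bar\mu)-\J(\bar\varGamma,\bar\mu)\bigr\|_{\mathcal{L}(\mathcal{Y},\mathcal{Y}^*)} \le C_2\,\|\Pi_n\bar\varGamma-\bar\varGamma\|_{\mathfrak{S}^{1,1}(\Omega)}\longrightarrow 0,
\]
which, combined with the bound \eqref{ass:strong_stability} on $\|\J(\bar\varGamma,\bar\mu)^{-1}\|$, allows the standard Banach perturbation-of-isomorphism lemma to produce an $n_0$ such that for all $n\ge n_0$, $\J(\Pi_n\bar\varGamma,\bar\mu)$ is an isomorphism on $\mathcal{Y}$ with inverse norm bounded by, say, $\tfrac{4}{3}\sigma$.

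For the second (Galerkin) step, since $\mathcal{Y}_n$ is finite-dimensional, it suffices to establish a uniform lower bound $\|\J_n(\Pi_n\bar\varGamma,\bar\mu)(\Psi_n,s)\|_{\mathcal{Y}_n^*}\ge \tfrac{1}{2\sigma}\,\|(\Psi_n,s)\|_{\mathcal{Y}_n}$ for $n\ge n_0$. I would obtain this by re-running the constructive argument from the proof of Lemma~\ref{stability} inside $\mathcal{Y}_n$: the coercivity of $I-\chi$ on $\mathfrak{S}^{1,1}(\Omega)$, which is equivalent to assumption~\textbf{A4}, restricts directly to the subspace $\mathcal{S}(V_n)\cap\mathfrak{S}^{1,1}(\Omega)$ and produces a bound on the Galerkin compression $\chi_n$ of $\chi$; a Lipschitz correction from Lemma~\ref{lemma:continuity} lets us pass from evaluation at $\bar\varGamma$ to evaluation at $\Pi_n\bar\varGamma$ up to a vanishing remainder; and the scalar denominator ${\rm Tr}((\chi_n-I)^{-1}g_{\bar\mu}(H(\rho_{\Pi_n\bar\varGamma})))$ in \eqref{s_value} stays uniformly bounded away from zero because $g_{\bar\mu}(H(\cdot))$ is trace-class with a strictly negative spectrum. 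Tracking constants through \eqref{Psi_value}--\eqref{s_value} then delivers the required factor of $2\sigma$.

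The main obstacle I anticipate is a clean Galerkin transfer of the coercivity of $I-\chi$. Assumption~\textbf{A4} controls the continuous operator at $\bar\varGamma$, whereas $\J_n$ is built from the spectral calculus of $H(\rho_{\Pi_n\bar\varGamma})$ with arguments restricted to $\mathcal{S}(V_n)$, and the $\mathfrak{S}^{1,1}(\Omega)$ norm mixes trace-class and $H^1$-Sobolev controls on the density-matrix kernel in a non-trivial way. Extracting the uniform-in-$n$ bound $\|(I-\chi_n)^{-1}\|\le 2\kappa$ needed to close the argument requires a careful contour analysis of $(z-H(\rho_{\Pi_n\bar\varGamma}))^{-1}$ along $\mathscr{C}$ in \eqref{eq:chi}, using standard Galerkin stability for the linear Schr\"odinger resolvent problem together with the density property \eqref{Vn-dense}.
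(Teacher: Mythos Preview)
Your two-step plan uses the right ingredients (Lemma~\ref{stability}, Lemma~\ref{lemma:continuity}, and \eqref{pi_vargamma}), but the order and mechanism differ from the paper's, and your second step takes an unnecessarily hard road. The paper works entirely through inf--sup conditions: starting from the bound $\sigma^{-1}$ for $\J(\bar\varGamma,\bar\mu)$ on $\mathcal{Y}\times\mathcal{Y}$, it first restricts the \emph{test} variable to $\mathcal{Y}_n$ (replacing $(\Phi,t)$ by $(\Pi_n\Phi,t)$ and using \eqref{pi_vargamma} to absorb the difference, at the cost of $\tfrac14\sigma^{-1}$), obtaining an inf--sup for $\J(\bar\varGamma,\bar\mu)$ on $\mathcal{Y}_n\times\mathcal{Y}_n$; only then does it perturb the evaluation point from $\bar\varGamma$ to $\Pi_n\bar\varGamma$ via \eqref{L-continuty-J}, losing another $\tfrac14\sigma^{-1}$. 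Since $\J_n(\Pi_n\bar\varGamma,\bar\mu)$ is exactly $\J(\Pi_n\bar\varGamma,\bar\mu)$ restricted to $\mathcal{Y}_n\times\mathcal{Y}_n$, this yields \eqref{Jn:iso} directly, without ever introducing a discrete response operator $\chi_n$.

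By contrast, you perturb first on the continuous level (fine), but then propose to handle the Galerkin restriction by re-running the explicit block inversion \eqref{Psi_value}--\eqref{s_value} inside $\mathcal{Y}_n$. The ``main obstacle'' you flag --- passing from $(I-\chi)^{-1}$ to $(I-\chi_n)^{-1}$ --- is a genuine difficulty created by this choice: positive definiteness of $I-\chi$ does restrict to subspaces, but the $\chi_n$ appearing in $\J_n$ is built from the spectral calculus of $H(\rho_{\Pi_n\bar\varGamma})$ and is \emph{not} simply the compression of $\chi$ to $\mathcal{S}(V_n)$; also, assumption~\textbf{A4} is stated as invertibility with a norm bound, not as coercivity, so your claimed equivalence needs an additional self-adjointness argument. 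None of this is insurmountable, but the contour/resolvent analysis you outline is substantial and left as a sketch. The paper's inf--sup route buys you the discrete stability for free from the continuous one plus density, with no discrete inversion required.
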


\begin{proof}
We have from Lemma \ref{stability} that $\mathcal{J}(\bar{\varGamma},\bar{\mu})$ is an isomorphism and the following inf-sup condition holds 
\begin{eqnarray*}
\inf_{(\Psi,s)\in \mathcal{Y}} \sup_{(\Phi,t)\in \mathcal{Y}} \frac{\big\langle\mathcal{J}(\bar{\varGamma},\bar{\mu}) (\Psi,s), (\Phi,t) \big\rangle}{\|(\Psi,s)\| \cdot\|(\Phi,t)\|} \geq \sigma^{-1}.
\end{eqnarray*}
Then this together with the fact $\mathcal{Y}_n\subset\mathcal{Y}$ implies
\begin{eqnarray*}
\sup_{(\Phi,t)\in \mathcal{Y}} \frac{\big\langle\mathcal{J}(\bar{\varGamma},\bar{\mu}) (\Psi_n,s), (\Phi,t) \big\rangle}{\|(\Psi_n,s)\| \cdot\|(\Phi,t)\|} \geq \sigma^{-1} 
\qquad \forall~(\Psi_n,s) \in \mathcal{Y}_n,
\end{eqnarray*}
Let $\Pi_n$ be the projection defined by \eqref{Pi_Gamma}.
For any $(\Phi,t)\in\mathcal{Y}$, we have
\begin{align*}
\big\langle\mathcal{J}(\bar{\varGamma},\bar{\mu})(\Psi_n,s), (\Pi_n\Phi,t)\big\rangle
&= \big\langle\mathcal{J}(\bar{\varGamma},\bar{\mu})(\Psi_n,s), (\Phi,t)\big\rangle - \big\langle\mathcal{J}(\bar{\varGamma},\bar{\mu})(\Psi_n,s), (\Phi - \Pi_n\Phi,t)\big\rangle .
\end{align*}
Then we have from the convergence \eqref{pi_vargamma} that there exists $n_1>0$, such that 
\begin{align}
\label{proof:stability:n}
\inf_{(\Psi_n,s)\in \mathcal{Y}_n} \sup_{(\Phi_n,t)\in \mathcal{Y}_n} \frac{\big\langle\mathcal{J}(\bar{\varGamma},\bar{\mu}) (\Psi_n,s), (\Phi_n,t) \big\rangle}{\|(\Psi_n,s)\| \cdot\|(\Phi_n,t)\|} \geq \frac{3}{4}\sigma^{-1} 
\end{align}
for any $n \geq n_1$.
Then we have from
\begin{align*}
& \big\langle\mathcal{J}_n(\Pi_n\bar{\varGamma},\bar{\mu})(\Psi_n,s),(\Phi_n,t)\big\rangle 
\\[1ex]
= & \big\langle\mathcal{J}\left(\bar{\varGamma},\bar{\mu}\right)(\Psi_n,s),(\Phi_n,t)\big\rangle
-\big\langle \big(\mathcal{J}(\bar{\varGamma},\bar{\mu}) -\mathcal{J}(\Pi_n\bar{\varGamma},\bar{\mu})\big) (\Psi_n,s),(\Phi_n,t)\big\rangle
\end{align*}
and the Lipschtiz continuity of $\mathcal{J}$ in Lemma \ref{lemma:continuity} that there exists $n_2>0$, such that
\begin{align}
\label{proof:stability:n2}
\inf_{(\Psi_n,s)\in \mathcal{Y}_n} \sup_{(\Phi_n,t)\in \mathcal{Y}_n} \frac{\big\langle\mathcal{J}_n (\Pi_n\bar{\varGamma},\bar{\mu}) (\Psi_n,s), (\Phi_n,t) \big\rangle}{\|(\Psi_n,s)\| \cdot\|(\Phi_n,t)\|} \geq \frac{1}{2}\sigma^{-1}
\end{align}
for any $n \geq n_2$.
Let $n_0=\max\big\{n_1,n_2\big\}$, we have from \eqref{proof:stability:n2} that $\J_n\big(\Pi_n\bar{\varGamma},\bar{\mu}\big)$ is an isomorphism and \eqref{Jn:iso} holds.
This completes the proof.
\end{proof}

\subsection{Convergence analysis}
\label{sec:convergence}

We can now derive the following theorem that ensures the convergence of the finite dimensional approximations, say the solutions of \eqref{dis:gal-mod} can converge to the ground state of the continuous problem \eqref{min_gamma}.

\begin{theorem}
\label{theo_converge}
Let $\bar{\varGamma}$ be the ground state solution of \eqref{min_gamma} $E_0$ be the ground state energy. 
Let $\bar{\varGamma}_{n}\in \mathcal{K}_{N,n}$ be the finite dimensional approximations obtained by \eqref{dis:gal-mod} for $n\in \mathbb{N}_+$.
If the assumptions {\bf A1} and {\bf A2} are satisfied, then
\begin{equation}
\label{eq:convergence}
\lim _{n \rightarrow \infty} \F\left(\bar{\varGamma}_{n}\right) = E_0.
\end{equation}
Moreover, if $\bar{\varGamma}$ is a unique minimizer of \eqref{min_gamma}, then $\rho_{\bar{\varGamma}_{n}}$ converges to  $\rho_{\bar{\varGamma}}$ strongly.
\end{theorem}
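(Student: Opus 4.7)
The plan is to prove \eqref{eq:convergence} by a standard two-sided variational bound ($\limsup E_n\le E_0$ via a trial sequence and $\liminf E_n\ge E_0$ via weak-* compactness together with Lemma~\ref{lemma:semicontinuity}), and to deduce the strong density convergence from the uniform bound on the kinetic energy combined with the compact Sobolev embedding on the bounded domain $\Omega$. Coercivity (Lemma~\ref{lemma:coercivity}) will be used to bound $\bar\varGamma_n$ in $\mathfrak{S}^{1,1}(\Omega)$, and weak lower semicontinuity (Lemma~\ref{lemma:semicontinuity}) to pass to the limit inside $\F$.

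\textbf{Upper bound.} The first step is to construct a trial sequence $\tilde\varGamma_n\in\mathcal{K}_{N,n}$ with $\tilde\varGamma_n\to\bar\varGamma$ in $\mathfrak{S}^{1,1}(\Omega)$. The naive candidate $\Pi_n\bar\varGamma$ from \eqref{Pi_Gamma} generally violates both $\operatorname{Tr}=N$ and the spectral bound $0\le\varGamma\le 1$, so instead I would expand $\bar\varGamma=\sum_i\bar f_i|\bar\phi_i\rangle\langle\bar\phi_i|$, truncate to the first $K$ orbitals, apply $\pi_n$ to each $\bar\phi_i$, and Gram--Schmidt orthonormalize within $V_n$ (well defined for $n$ large since $\pi_n\bar\phi_i\to\bar\phi_i$ in $H^1$). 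I would then reattach the occupation numbers $\bar f_1,\dots,\bar f_{K-1}$ and a slightly adjusted $\bar f_K+\delta_K$, with $\delta_K=N-\sum_{i=1}^K\bar f_i\to 0$ as $K\to\infty$, so that the trace equals $N$ and the spectral bound is preserved for $K$ large. For each fixed $K$ the construction converges in $\mathfrak{S}^{1,1}(\Omega)$ to the $K$-term truncation of $\bar\varGamma$ as $n\to\infty$, and a diagonal extraction yields $\tilde\varGamma_n\in\mathcal{K}_{N,n}$ with $\tilde\varGamma_n\to\bar\varGamma$. Continuity of each term of $\F$ along such strongly convergent sequences (the Hartree and exchange-correlation parts via A1--A2 and the Sobolev embedding exactly as in the proof of Lemma~\ref{lemma:semicontinuity}, the entropy via continuity of $x\mapsto x\ln x+(1-x)\ln(1-x)$ on $[0,1]$ applied to finitely many nontrivial occupations) gives $\F(\tilde\varGamma_n)\to E_0$, and minimality of $\bar\varGamma_n$ yields $E_n\le\F(\tilde\varGamma_n)\to E_0$.

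\textbf{Lower bound and density convergence.} With the upper bound in hand, $\F(\bar\varGamma_n)$ is eventually bounded above, and Lemma~\ref{lemma:coercivity} forces $\|\bar\varGamma_n\|_{\mathfrak{S}^{1,1}(\Omega)}\le C$. Banach--Alaoglu (in the duality used in Lemma~\ref{lemma:semicontinuity}) then extracts a subsequence, still labeled $\bar\varGamma_n$, converging weakly-* to some $\bar\varGamma^*\in\mathfrak{S}^{1,1}(\Omega)$. The spectral constraint $0\le\bar\varGamma^*\le 1$ passes to the limit by testing against fixed $\phi\in L^2(\Omega)$. Because $\Omega$ is bounded, the argument already used inside the proof of Lemma~\ref{lemma:semicontinuity} delivers $\rho_{\bar\varGamma_n}\to\rho_{\bar\varGamma^*}$ strongly in $L^p(\Omega)$ for every $1\le p<3$, so in particular $\operatorname{Tr}(\bar\varGamma^*)=\int_\Omega\rho_{\bar\varGamma^*}=\lim_n\int_\Omega\rho_{\bar\varGamma_n}=N$ and $\bar\varGamma^*\in\mathcal{K}_N$. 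Weak lower semicontinuity gives $E_0\le\F(\bar\varGamma^*)\le\liminf_n\F(\bar\varGamma_n)$, which together with the upper bound closes \eqref{eq:convergence} and identifies $\bar\varGamma^*$ as a minimizer of \eqref{min_gamma}. When $\bar\varGamma$ is the unique minimizer, applying the same extraction to every subsequence upgrades weak-* convergence of the whole sequence to $\bar\varGamma$, and the claimed strong density convergence is exactly the $L^p$-convergence of $\rho_{\bar\varGamma_n}$ already obtained. The main obstacle I anticipate is the trial-sequence construction in the upper bound: simultaneously enforcing self-adjointness, the $\operatorname{Tr}=N$ normalization, the spectral bound $0\le\varGamma\le 1$, and support in $V_n$, while keeping $\mathfrak{S}^{1,1}$-proximity to $\bar\varGamma$, requires the truncate--orthonormalize--rescale procedure above together with the diagonal limit $K,n\to\infty$; the remainder of the proof is a routine application of the direct method of the calculus of variations.
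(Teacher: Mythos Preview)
Your proposal is correct and follows essentially the same route as the paper: coercivity (Lemma~\ref{lemma:coercivity}) for boundedness, Banach--Alaoglu for weak-* compactness, Lemma~\ref{lemma:semicontinuity} for the lower bound, a trial sequence for the upper bound, and the density-convergence argument lifted from the proof of Lemma~\ref{lemma:semicontinuity}. The one substantive difference is in the upper bound: the paper simply asserts $\Pi_{n_j}\bar\varGamma\in\mathcal{K}_{N,n_j}$ and uses $\F(\Pi_{n_j}\bar\varGamma)\to\F(\bar\varGamma)$ directly, whereas you observe that the projected operator $\sum_i f_i|\pi_n\phi_i\rangle\langle\pi_n\phi_i|$ need not satisfy $\operatorname{Tr}=N$ or $0\le\varGamma\le 1$ exactly, and you supply the truncate--orthonormalize--adjust--diagonalize construction to obtain a genuine admissible trial sequence. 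Your version is the more careful of the two on this point; otherwise the arguments coincide.
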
 

\begin{proof}
Note that Lemma \ref{lemma:coercivity} implies that $\left\{\bar{\varGamma}_{n}\right\}_{n=1}^{\infty}$ is bounded in $\mathcal{K}_{N,n}$. 
Then by applying the Banach–Alaoglu Theorem \cite{reed80}, there exists a subsequence 
$\left\{\bar{\varGamma}_{n_{j}}
\right\}_{j=1}^{\infty}$, such that $\bar{\varGamma}_{n_{j}}$ converge to some $\bar{\varGamma}_{\infty} \in \mathfrak{S}^{1,1}(\Omega)$ in the weak-* topology of $\mathfrak{S}^{1,1}(\Omega)$. 
We shall first show that
\begin{eqnarray}
\label{convergence of energy}
\F(\bar{\varGamma}_{\infty}) 
= E_0.
\end{eqnarray}
By using $\bar{\varGamma}_{n_j} \stackrel{\mathrm{w}^*}{\rightharpoonup} \bar{\varGamma}_{\infty}$ and Lemma \ref{lemma:semicontinuity}, we have
\begin{eqnarray*}
\liminf _{j \rightarrow \infty} \F\left(\bar{\varGamma}_{n_j}\right) \geq \F\left(\bar{\varGamma}_{\infty}\right).
\end{eqnarray*}
Note that $\Pi_{n_j}\bar{\varGamma}\in \mathcal{K}_{N,n_j}$ and the projection satisfies \eqref{pi_vargamma}, we have
\begin{eqnarray*}
\F\left(\bar{\varGamma}_{\infty}\right) \leq \liminf_{j \rightarrow \infty} \F\left(\bar{\varGamma}_{n_j}\right) 
\leq \liminf_{j \rightarrow \infty} 
\F\left(\Pi_{n_j} \bar{\varGamma}\right) 
= \F\left(\bar{\varGamma}\right) = E_0.
\end{eqnarray*}
This together with the fact $\bar{\varGamma}$ is the solution of \eqref{min_gamma} indicates \eqref{convergence of energy}.

If $\bar{\varGamma}$ is a unique solution of \eqref{min_gamma}, we have $\bar{\varGamma}=\bar{\varGamma}_{\infty}$, and hence $\bar{\varGamma}_{n} \stackrel{\mathrm{w}^*}{\rightharpoonup} \bar{\varGamma}$. 
By using the same argument as that in the proof of Lemma \ref{lemma:semicontinuity}, we can derive the convergence of the density, that is, $\rho_{\bar{\varGamma}_{n}}\rightarrow\rho_{\bar{\varGamma}}$ strongly in $L^{2}(\Omega)$.
\end{proof}

\begin{remark}[the nonconvex case]
\label{remark:nonconvex}
Note that the condition on the uniqueness of ground state solution $\bar{\varGamma}$ in Theorem \ref{theo_converge} may not always hold.
In that case, we can still find a subsequence with energy limit converging to $E_0$ for each $\bar{\varGamma}$. In detail, we can define the set of the ground state solutions of \eqref{min_gamma} by
\begin{equation*}
\Theta := \{\bar{\varGamma} \in \mathcal{K}_{N}: 
\bar{\varGamma}~{\rm solves}~\eqref{min_gamma} \}\qquad {\rm and} \qquad \Theta_n := \{\bar{\varGamma}_n \in \mathcal{K}_{N,n}: \bar{\varGamma}_n~{\rm solves}~\eqref{dis:gal-mod} \}.
\end{equation*}
With similar arguments as in the proof of Theorem \ref{theo_converge}, we can prove that each subsequence $\left\{\bar{\varGamma}_{n_{j}}
\right\}_{j=1}^{\infty}$ in $\Theta_n$ converges to a certain $\bar{\varGamma}_n$ in $\Theta$ and moreover 
\begin{equation*}
\lim _{n \rightarrow \infty} \mathcal{D}\left(\Theta_n, \Theta\right):= \sup_{\bar{\varGamma}\in\Theta} \inf_{\bar{\varGamma}_n\in\Theta_n} \|\rho_{\bar{\varGamma}_{n}} - \rho_{\bar{\varGamma}}\|_{L^2}=0.
\end{equation*}
\end{remark}

\subsection{A priori error estimate}
\label{sec:priori}

In addition to the convergence result, we can further derive an {\it a priori} error estimate for the finite dimensional approximations of the ground state solution. 
In particular, we will utilize the inverse function theorem \cite{ortner09} to establish a optimal convergence rate for the ground state approximations.

\begin{theorem}
\label{error-estimate}
Let $\bar{\varGamma}\in \mathcal{K}_N$ be a solution of  \eqref{min_gamma}.
If the assmuptions {\bf A1-A4} are satisfied, then there exist $\delta>0$ and $n_0\in\mathbb{N}_+$ 
such that for any $n>n_0$, \eqref{dis:gal-mod}  has a unique solution $\bar{\varGamma}_n\in \mathcal{K}_{N,n}\cap B_\delta(\bar{\varGamma})$. 
Moreover, there exists a constant $C$ such that
\begin{eqnarray*}
\label{eq:apriori}
\|\bar{\varGamma}_n -\bar{\varGamma}\|_{\mathfrak{S}^{1,1}(\Omega)}  
\leq C\inf_{\varGamma_n\in S(V_n)\cap \mathfrak{S}^{1,1}(\Omega)} \|\varGamma_n- \bar{\varGamma}\|_{\mathfrak{S}^{1,1}(\Omega)}.
\end{eqnarray*}
\end{theorem}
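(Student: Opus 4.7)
The plan is to apply the quantitative inverse function theorem of \cite{ortner09} to the discrete residual map $\mathcal{G}_n$ at the reference point $(\Pi_n\bar{\varGamma},\bar{\mu})\in\mathcal{Y}_n$. The three standard ingredients required are: (i) invertibility of the Jacobian at the reference point with a uniform bound, supplied by Lemma \ref{stabilityvn}; (ii) Lipschitz continuity of the Jacobian on a neighbourhood, supplied by Lemma \ref{lemma:continuity} (the estimate transfers verbatim to $\mathcal{J}_n$, since $\mathcal{J}_n$ is the restriction of $\mathcal{J}$ to $\mathcal{Y}_n$ in the bra-ket expression); and (iii) smallness of the residual $\mathcal{G}_n(\Pi_n\bar{\varGamma},\bar{\mu})$, which needs a short consistency argument. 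Once these are verified, existence, local uniqueness, and the Céa-type bound all fall out of a single application of the theorem.

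For the consistency step I would exploit $\mathcal{G}(\bar{\varGamma},\bar{\mu})=0$ together with the fact that $\mathcal{G}_n$ shares the algebraic form of $\mathcal{G}$. Writing
\begin{equation*}
\mathcal{G}_n(\Pi_n\bar{\varGamma},\bar{\mu}) = \Bigl( f_{\bar{\mu}}\bigl(H(\rho_{\Pi_n\bar{\varGamma}})\bigr) - f_{\bar{\mu}}\bigl(H(\rho_{\bar{\varGamma}})\bigr) + (\bar{\varGamma}-\Pi_n\bar{\varGamma}),\ \mathrm{Tr}(\Pi_n\bar{\varGamma}-\bar{\varGamma})\Bigr),
\end{equation*}
the first component is bounded in $(\mathfrak{S}^{1,1}(\Omega))^*$ by the Lipschitz estimate \eqref{L-continuty-G}, and the second component by the trace-class embedding. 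This gives
\begin{equation*}
\bigl\|\mathcal{G}_n(\Pi_n\bar{\varGamma},\bar{\mu})\bigr\|_{\mathcal{Y}_n^*} \leq C\,\|\Pi_n\bar{\varGamma}-\bar{\varGamma}\|_{\mathfrak{S}^{1,1}(\Omega)},
\end{equation*}
which goes to zero by \eqref{pi_vargamma}. Together with (i) and (ii), the inverse function theorem then yields $\delta>0$ and $n_0$ such that for $n>n_0$ there is a unique $(\bar{\varGamma}_n,\bar{\mu}_n)$ in a ball $B_\delta(\Pi_n\bar{\varGamma},\bar{\mu})\subset\mathcal{Y}_n$ with $\mathcal{G}_n(\bar{\varGamma}_n,\bar{\mu}_n)=0$, and
\begin{equation*}
\|\bar{\varGamma}_n-\Pi_n\bar{\varGamma}\|_{\mathfrak{S}^{1,1}(\Omega)} + |\bar{\mu}_n-\bar{\mu}| \leq C\,\bigl\|\mathcal{G}_n(\Pi_n\bar{\varGamma},\bar{\mu})\bigr\|_{\mathcal{Y}_n^*}.
\end{equation*}
A triangle inequality against $\Pi_n\bar{\varGamma}$ together with the definition \eqref{Pi_Gamma} of $\Pi_n$ as the best $\mathfrak{S}^{1,1}(\Omega)$-approximation from $\mathcal{S}(V_n)\cap\mathfrak{S}^{1,1}(\Omega)$ finishes the proof.

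The main obstacle I anticipate is the consistency estimate rather than the mechanical invocation of the inverse function theorem. One must check that (a) the residual of $\mathcal{G}_n$ at the projected state is genuinely of the order of the best-approximation error in the $\mathfrak{S}^{1,1}$-norm (and not merely in a weaker norm), and (b) the unavoidable mismatch $\mathrm{Tr}(\Pi_n\bar{\varGamma})-N$ produced by the projection is itself controlled at the same rate, so that the scalar Lagrange-multiplier direction does not destroy the estimate. A secondary technical point is verifying that the neighbourhood on which Lemma \ref{lemma:continuity} gives Lipschitz continuity of $\mathcal{J}$ can be chosen independently of $n$, which follows because the Lipschitz constants depend only on $\bar{y}=(\bar{\varGamma},\bar{\mu})$ and because $\Pi_n\bar{\varGamma}\to\bar{\varGamma}$ in $\mathfrak{S}^{1,1}(\Omega)$ by \eqref{pi_vargamma}.
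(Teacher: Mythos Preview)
Your proposal is correct and follows essentially the same route as the paper: consistency via the Lipschitz estimate \eqref{L-continuty-G} for $\mathcal{G}$, stability from Lemma~\ref{stabilityvn}, an application of the inverse function theorem at $(\Pi_n\bar{\varGamma},\bar{\mu})$, and the final triangle inequality against the best approximation $\Pi_n\bar{\varGamma}$. If anything, your write-up is slightly more explicit than the paper's in flagging the role of the Lipschitz continuity of $\mathcal{J}$ (needed to apply the inverse function theorem but not restated in the paper's proof) and in isolating the trace mismatch $\mathrm{Tr}(\Pi_n\bar{\varGamma})-N$ as a separate term to be controlled.
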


\begin{proof}
{\it (1) Consistency.}
Utilizing the fact that $\left.\mathcal{G}
(\bar{\varGamma},\bar{\mu})\right|_{\mathcal{K}_{N,n}}=0$
and $\mathcal{G}_{n}\left(\Pi_{n} \bar{\varGamma},\bar{\mu}\right)
=\left(\mathcal{G}\left(\Pi_{n}\bar{\varGamma},\bar{\mu}\right)
\right|_{\mathcal{K}_{N,n}}$, we proceed as follows:
\begin{align*}
\left\|\mathcal{G}_{n}\left(\Pi_{n} \bar{\varGamma},\bar{\mu}\right)
\right\|_{(\mathfrak{S}^{{1,1}}(\Omega))^{*}}
& = \left\|\left.\mathcal{G}\left(\Pi_{n}\bar{\varGamma},
\bar{\mu}\right)\right|_{\mathcal{K}_{N, n}}
-\left.\mathcal{G}(\bar{\varGamma},\bar{\mu})\right|
_{\mathcal{K}_{N, n}}
\right\|_{(\mathfrak{S}^{1,1}(\Omega))^{*}}
\\[1ex]
& = \left\|\mathcal{G}\left(\Pi_{n} \bar{\varGamma},
\bar{\mu}\right)-\mathcal{G}(\bar{\varGamma},\bar{\mu})\right\|
_{(\mathfrak{S}^{1,1}(\Omega))^{*}}.
\end{align*}
The Lipschitz continuity of $\mathcal{G}$ yields
\begin{eqnarray*}
\left\|\mathcal{G}_{n}\left(\Pi_{n} \bar{\varGamma},\bar{\mu}\right)
\right\|_{(\mathfrak{S}^{{1,1}}(\Omega))^{*}} 
\leq C\left\|\bar{\varGamma}-\Pi_{n} 
\bar{\varGamma}\right\|_{\mathfrak{S}^{1,1}(\Omega)}. 
\end{eqnarray*}
Hence, we have completed the proof of stability.

{\it (2) Stability.}
Note that Lemma \ref{stabilityvn} ensures that there exists $n_0\in\mathbb{N}_+$, such that $\|\mathcal{J}_n (\Pi_n\bar{\varGamma},
\bar{\mu})^{-1}\|_{(\mathfrak{S}^{1,1}(\Omega))^{*}} \leq 2\sigma$ for any $n>n_0$.

{\it (3) Apply the Inverse Function theorem.}
According to the Inverse Function theorem \cite{ortner11}, we have that for $n>n_0$, there exists some $\delta >0$ and a unique solution $\bar{\varGamma}_n$ in $\mathcal{K}_{N,n}\cap B_\delta(\bar{\varGamma})$, such that $\bar{\varGamma}_n$ solves \eqref{min_gamma} and
\begin{eqnarray}
\label{ift}
\|\bar{\varGamma}_n - \Pi_n\bar{\varGamma}\| _{\mathfrak{S}^{1,1}(\Omega)} 
\leq C_{\sigma}\|\bar{\varGamma} - \Pi_n\bar{\varGamma}\|_{\mathfrak{S}^{1,1}(\Omega)}
\end{eqnarray}
with positive constant $C_{\sigma}$.
Finally, we can derive from \eqref{ift} that
\begin{eqnarray*}
\left\|\bar{\varGamma}-\bar{\varGamma_n}\right\|_{\mathfrak{S}^{1,1}(\Omega)} 
\leq \left\|\bar{\varGamma}-\Pi_n\bar{\varGamma}\right\|_{\mathfrak{S}^{1,1}(\Omega)}
+ \left\|\Pi_n \bar{\varGamma}-\bar{\varGamma_n}\right\|_{\mathfrak{S}^{1,1}(\Omega)}
\leq C\|\bar{\varGamma} - \Pi_n\bar{\varGamma}\|_{\mathfrak{S}^{1,1}(\Omega)},
\end{eqnarray*}
which concludes the proof.
\end{proof}

\begin{remark}
[Orbital-based formulation: error estimate]
From the error estimates for ground state density matrix in Theorem \ref{error-estimate}, we can also demonstrate the convergence of the numerical approximation of the orbitals:
\begin{eqnarray}
\label{orbital_error}
\|\phi_i-\phi_{i,n}\|_{H_0^1(\Omega)} 
\leq 
C\|\bar{\varGamma}-\bar{\varGamma}_n\|_{\mathfrak{S}^{1,1}(\Omega)} 
\leq C \max_{1\leq i\leq N} \|\pi_n\phi_i-\phi_i\|_{H_0^1(\Omega)} ,
\end{eqnarray}
where $\phi_{i,n}~(i=1,2,\cdots)$ are the solutions of \eqref{orbital:phi:n}.
The decay of the numerical errors are then determined by the discretization methods, for example, it could be an exponential decay for plane-wave method (see Section \ref{sec:numerics}) and algebraic decay for finite element methods.
\end{remark}

\section{Numerical experiments}
\label{sec:numerics}
\setcounter{equation}{0}

In this section, we present some numerical experiments to support the theory of this work.
In particular, we consider two typical material systems: 
(1) silicon with the diamond structure and (2) aluminum with the FCC lattice (see Figure \ref{fig:Si} and \ref{fig:Al} for the crystal structure).

All simulations  were performed on a workstation equipped with an Intel Xeon W-3275M 56-core processor and 1.5 TB of RAM, by using the Julia \cite{Julia} package DFTK.jl \cite{DFTKjcon}.
We use the PW LDA exchange-correlation energy functional \cite{perdew92} and the HGH pseudopotential \cite{hamann79}. 
The computational precision is set to a tolerance of $10^{-6}$.
All numerical results are given in atomic unit (a.u.).
To evaluate the numerical errors, we consider numerical solutions obtained on very fine discretizations as the exact solutions.

The discretization scheme we used in the numerical simulation is the plane-wave method \cite{lin19,martin05}, which is the most widely used method for electronic structure calculations of materials. 
Let $\LL$ be the periodic lattice, $\Omega$ be the unit cell with respect to the lattice, and $\LL^*$ be the dual lattice of $\LL$.
Then the plane-wave basis function can be written as
\begin{align*}
e_{\Gp}({\rr})=|\Omega|^{-1/2} e^{i\Gp\cdot\rr}
\qquad \Gp\in\LL^* .
\end{align*} 
The family $\{e_{\Gp}\}_{\Gp\in\LL^*}$ forms an orthonormal basis set of 
\begin{eqnarray*}
L_{\#}^2(\Omega)=\{u\in L^2_{\rm loc}(\mathbb{R}^{d})~:~u~{\rm is}~\LL{\rm -periodic}\}. 
\end{eqnarray*}
For $u\in L_{\#}^2(\Omega)$, we have
\begin{equation*}
u({\rr})=\sum_{\Gp\in\LL^*}\hat{u}_{\Gp} e_{\Gp}(\rr)
\qquad{\rm with}\qquad 
\hat{u}_{\Gp} = (e_{\Gp},u)_{L^2_{\#}(\Omega)} = |\Omega|^{-1/2}\int_{\Omega} u(\rr)e^{-i\Gp\cdot\rr}\dd{\rr}.
\end{equation*}
In practical simulations, we will use an energy cut-off $\Ec>0$ and approximate the eigenfunctions in the following finite dimensional space 
\begin{equation}
\label{def:spaceE}
V_{\Ec}(\Omega) := \bigg\{ u\in L^2_{\#}(\Omega)~:~u(\rr) = \sum_{\Gp\in\LL^*,~|\Gp|^2\leq 2\Ec} c_{\Gp}e_{\Gp}({\rr})
\bigg\}.
\end{equation}
For $u\in L^2_{\#}(\Omega)$, the best approximation of $u$ in $V_{\Ec}(\Omega)$ is given by the following projection
\begin{align*}
\pi_{\Ec} u = \sum_{\Gp\in\LL^*,~|\Gp|^2\leq 2\Ec} \hat{u}_{\Gp} e_{\Gp}(\rr).
\end{align*} 
The more regularity $u$ has, the faster this projection converge to $u$.
Note that we are mainly concerned with the $H^1$-norm error in this work. 
We have from \cite{Claudio} that for $u\in H^s_{\#}(\Omega)$ the convergence is super-algebraic
\begin{equation}
\label{error:interpolation:planewave}
\|u-\pi_{\Ec} u\|_{H^1_{\#}(\Omega)}  \leq C_{u,s} \Ec^{(1-s)}
\qquad\forall~s>1 ,
\end{equation}
and from \cite{Trefethen} that for $u$ analytic the convergence is exponential
\begin{equation}
\label{error:interpolation:planewave:exp}
\|u-\pi_{\Ec} u\|_{H^1_{\#}(\Omega)}  \leq C_u e^{-\alpha_u\Ec} 
\end{equation}
with some positive constants $C_{u,s}$, $C_u$ and $\alpha_u$.
Since we are using smooth pseudopotentials in our simulations, we can assume that the eigenfunctions are sufficiently smooth.

\vskip 0.1cm

{\bf Example 1.}
Consider a silicon (Si) system with the diamond structure (see Figure \ref{fig:Si} for the crystal structure). 
We choose a system with $4\times 2\times 2$ unit cells, which contains 32 Si atoms.
The numerical errors with respect to the energy cutoffs are shown in Figure \ref{fig:Si}.
We observe that the ground state energy and electron density both converge exponentially fast with respect to the energy cutoffs, at different temperatures.
This matches perfectly with our theoretical perspective.

\begin{figure}[htb!]
\vskip 0.1cm
\centering
\includegraphics[width=4.2cm]{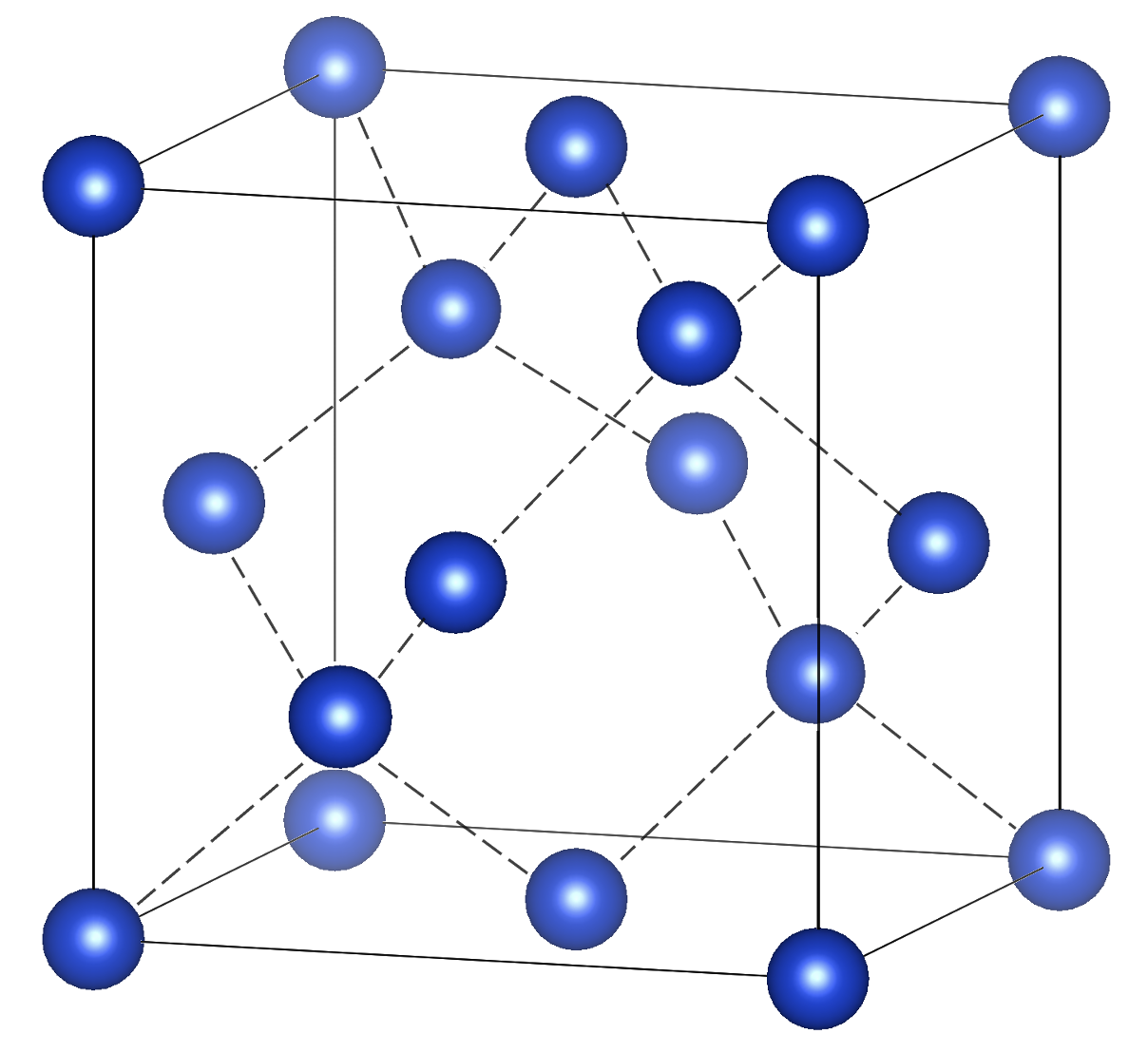}
\hskip 0.3cm
\includegraphics[width=5.0cm]{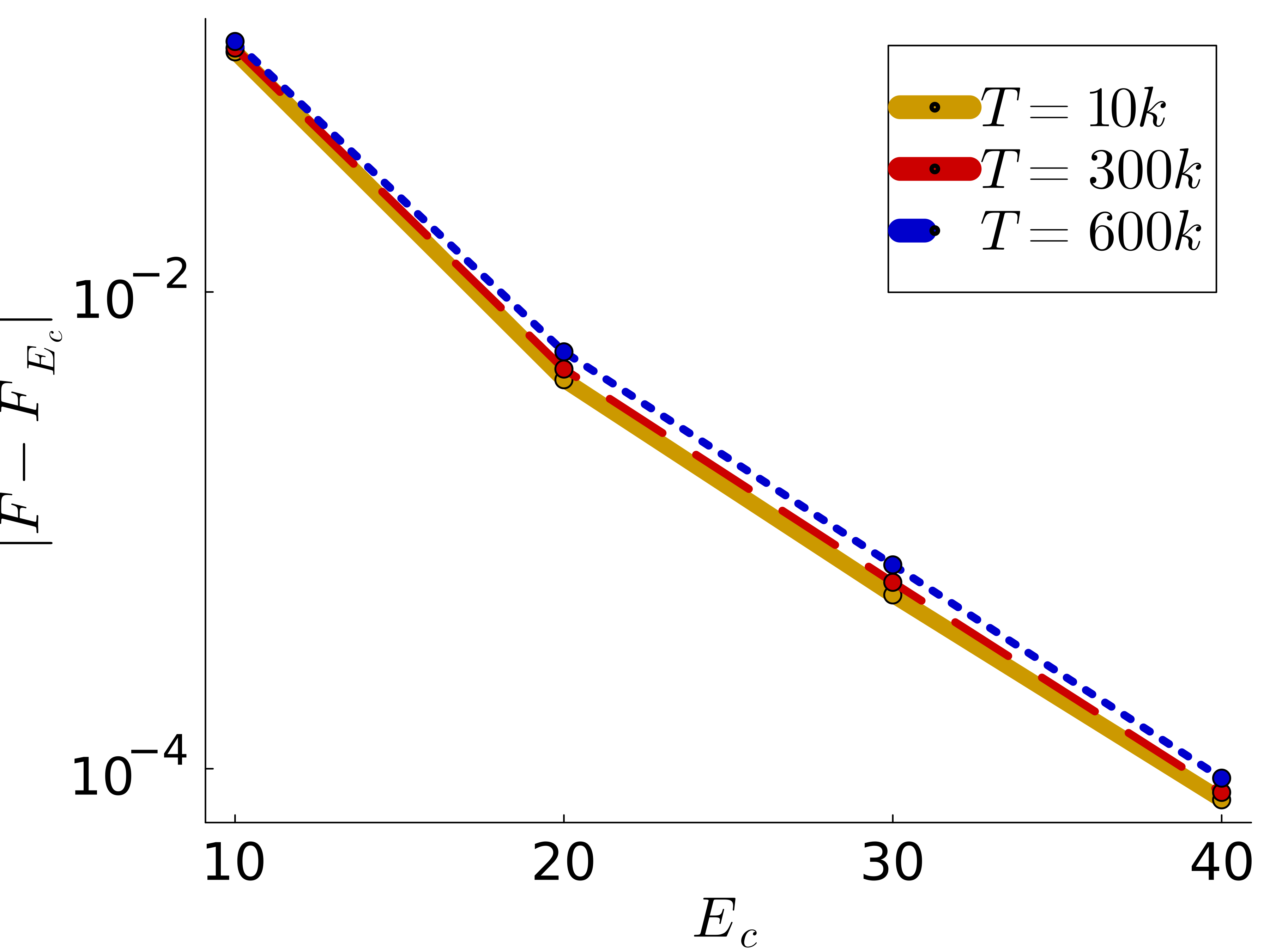}
\hskip 0.3cm
\includegraphics[width=5.0cm]{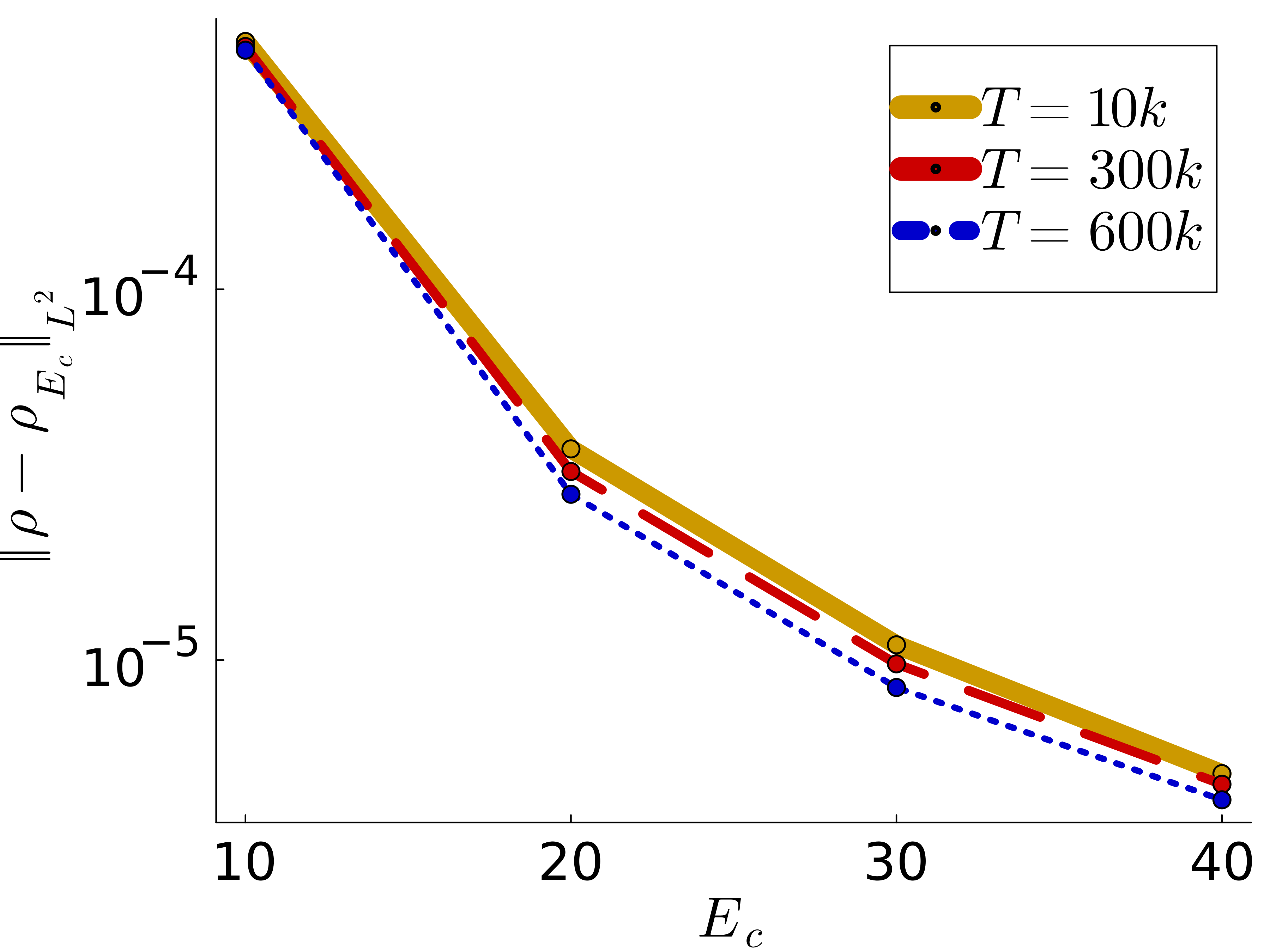}
\caption{(Example 1. Silicon) 
Left: diamond structure in a unit cell. Middle: energy error with respect to the energy cutoffs. Right: $L^2$-error of the electron density with respect to the energy cutoffs.}
\label{fig:Si}
\vskip 0.2cm
\end{figure}

\vskip 0.1cm

{\bf Example 2.}
Consider an aluminium system with the face-centered cubic (FCC) structure, see Figure \ref{fig:Al}.
We choose a system with $4\times 2\times 2$ unit cells, which contains 64 Al atoms.
We show in Figure \ref{fig:Al} the decay of numerical errors of the ground state energy and electron density.
Both errors exhibit an exponential decay with respect to the plane-wave energy cutoffs.
This observation is again consistent with our theory.

\begin{figure}[htb!]
\vskip 0.1cm
\centering
\includegraphics[width=4.2cm]{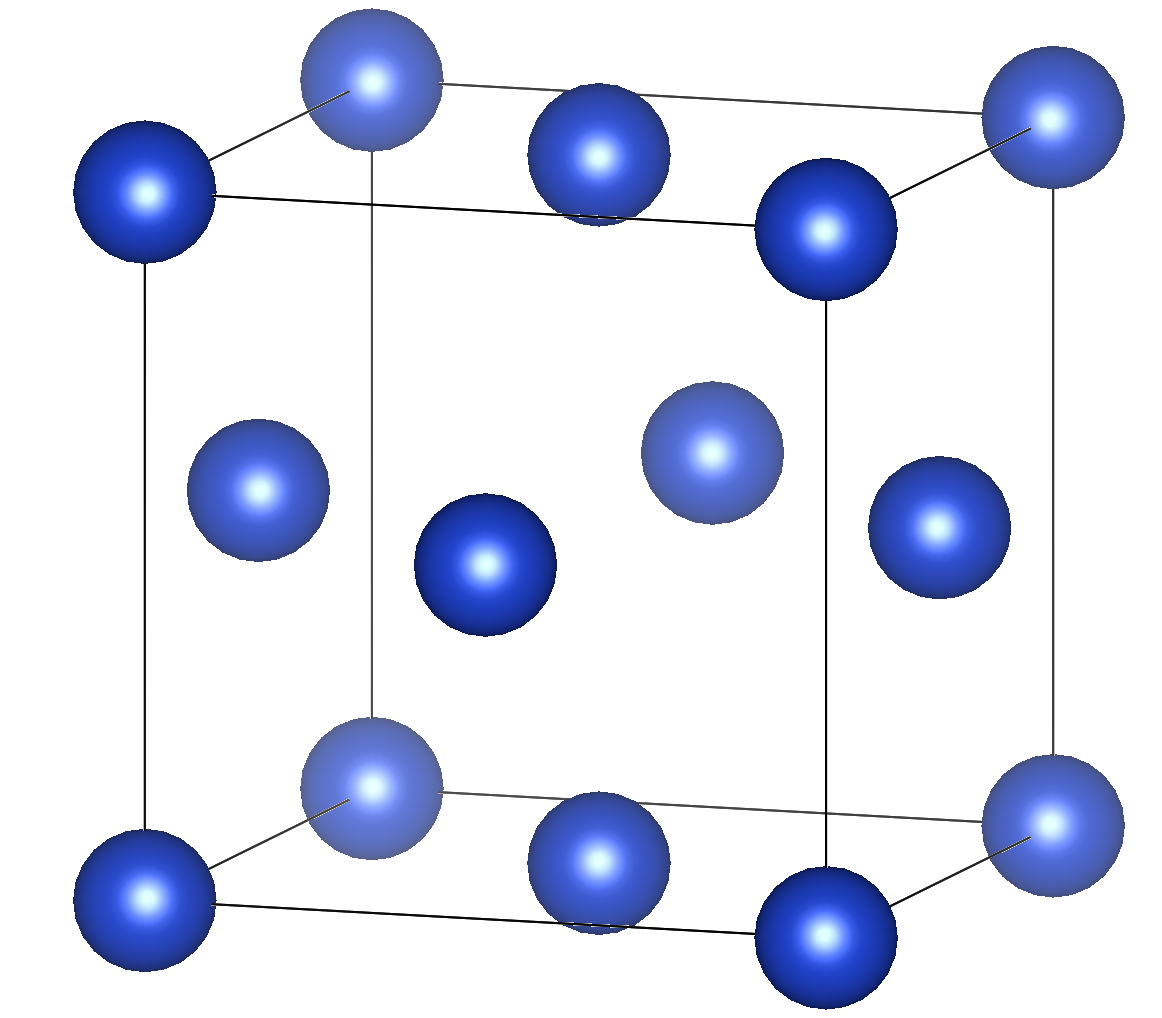}
\hskip 0.3cm
\includegraphics[width=5.0cm]{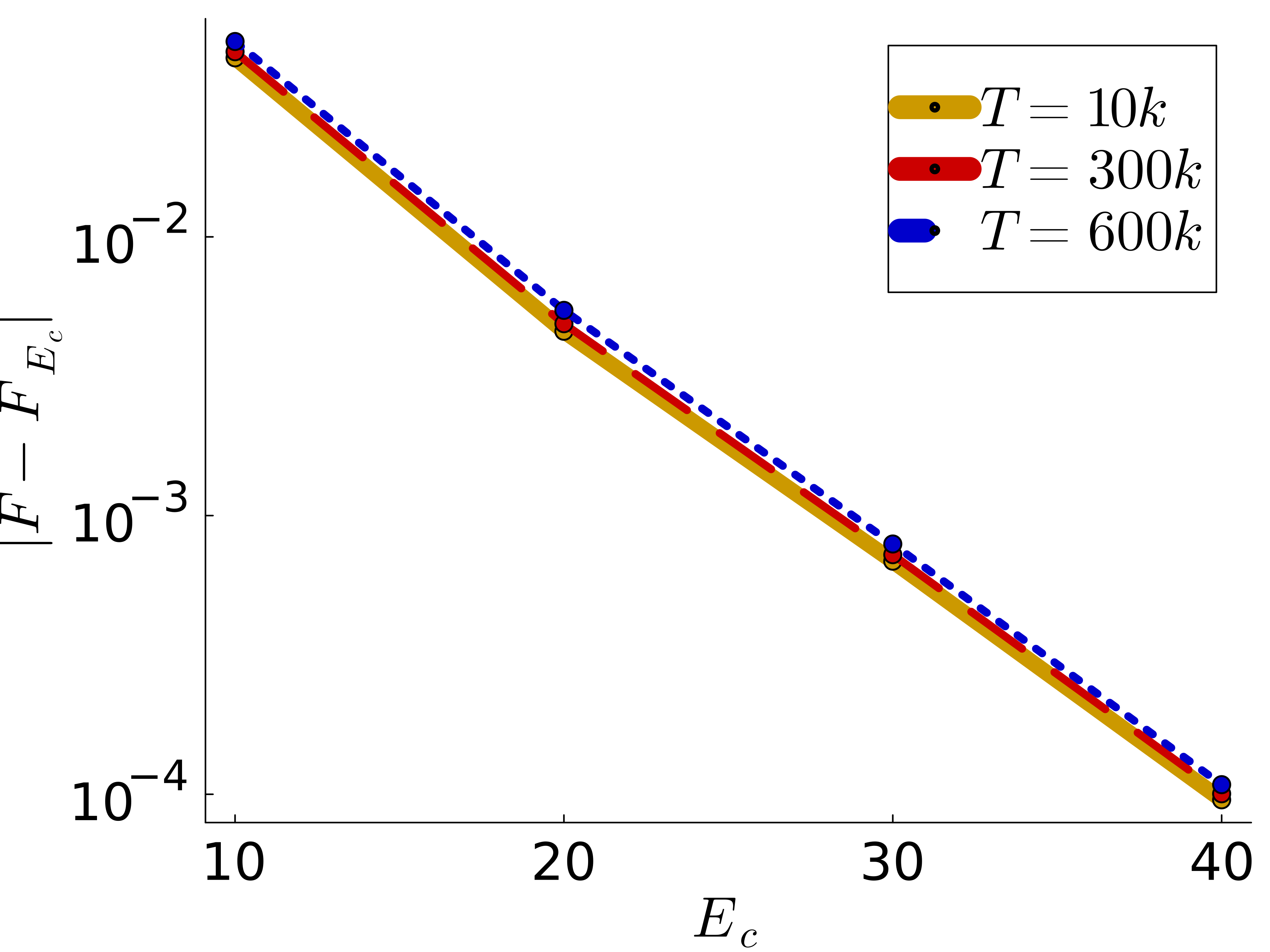}
\hskip 0.3cm
\includegraphics[width=5.0cm]{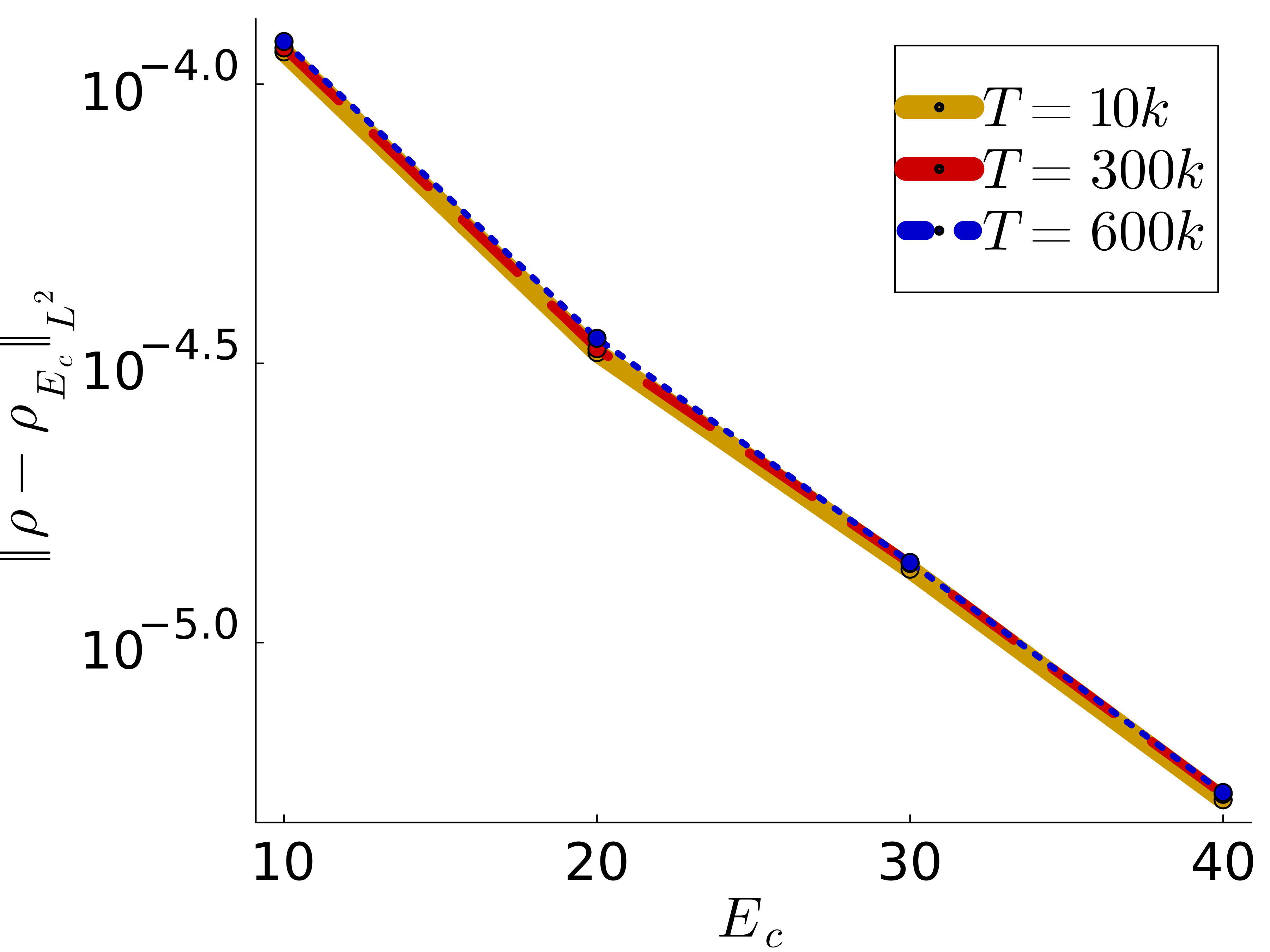}
\caption{(Example 2. Aluminium) 
Left: FCC lattice in a unit cell. Middle: energy error with respect to the energy cutoffs. Right: $L^2$-error of the electron density with respect to the energy cutoffs.}
\label{fig:Al}
\vskip 0.2cm
\end{figure}

\section{Concluding remarks}
\label{sec_conclusion}
\setcounter{equation}{0}

In this work, we study the numerical aspects of the ground state calculations for Mermin-Kohn-Sham equation in finite temperature DFT.
We reformulate the nonlinear eigenvalue problem by a variational problem with respect to the one-electron density matrix, then justify the convergence of finite dimensional approximations and further derive an {\it a priori} error estimate.
For future perspective, we will study the zero temperature limit of the problem, which is more complicated as the stability is lost at this limit for metallic systems.

\subsection*{Acknowledgments}

The authors would like to thank Reinhold Schneider and Christoph Ortner for inspiring discussions on this topic.
This work was supported by the National Key R \& D Program of China under grants 2019YFA0709600 and 2019YFA0709601. 
HC's work was also supported by National Natural Science Foundation of China under grant NSFC12371431.
XG's work was supported by National Natural Science Foundation of China under grants U23A20537 and Funding of National Key Laboratory of Computational Physics.

\small
\bibliographystyle{plain}
\bibliography{bib.bib}

\end{document}